\documentclass[journal,twoside,print]{ieeecolor}
\usepackage{generic}
\usepackage{cite}
\usepackage{amsmath,amssymb,amsfonts}
\usepackage{graphicx}
\usepackage{booktabs}
\usepackage{adjustbox}
\usepackage{hyperref}
\hypersetup{hidelinks}
\usepackage{textcomp}
\makeatletter
\def\ps@titlepagestyle{%
  \def\@oddhead{}\def\@evenhead{}%
  \def\@oddfoot{}\def\@evenfoot{}}
\makeatother
\newtheorem{thm}{Theorem}
\newtheorem{lem}[thm]{Lemma}
\newtheorem{assum}[thm]{Assumption}
\newtheorem{defn}[thm]{Definition}
\newtheorem{rem}[thm]{Remark}
\newtheorem{proposition}[thm]{Proposition}
\newenvironment{pf}{\begin{proof}}{\end{proof}}

\begin{document}

\title{A Riccati Approach to Mixed $H_2/H_\infty$ Closed-Loop
Games for Infinite-Dimensional Stochastic Systems}

\author{Mingyang Shen, Weihai Zhang, Qingxin Meng, Maoning Tang
\thanks{Mingyang Shen, Qingxin Meng, and Maoning Tang are with the Department of Mathematical Sciences, Huzhou Normal University, Huzhou 313000, Zhejiang, China (e-mail: smy03012025@163.com; mqx@zjhu.edu.cn; tmorning@zjhu.edu.cn).}%
\thanks{Weihai Zhang is with the College of Automation, Shandong University of Science and Technology, Qingdao, China (e-mail: w\_hzhang@163.com).}%
\thanks{Qingxin Meng was supported by the National Natural Science Foundation of China (No.~12271158).}%
\thanks{Corresponding author: Maoning Tang.}}

\maketitle

\begin{abstract}
This paper studies a finite-horizon mixed $H_2/H_\infty$ feedback Nash
game for stochastic evolution equations on a separable Hilbert space.
The drift generator is unbounded, the remaining coefficients are bounded,
and the one-dimensional Brownian diffusion depends on the state, control,
and disturbance. The $H_2$ channel is an LQ state--control energy.
For the disturbance channel, the stochastic LQ uniform-convexity
characterization yields equivalence between strict induced $L^2$ attenuation
and unique strongly regular mild Riccati solvability. Simultaneous
bounded-generator approximation of the Lyapunov equation and the state
justifies quadratic identities for strongly continuous mild operator
solutions. These identities verify both Nash inequalities and full-output
strict attenuation from a strongly regular coupled Riccati pair.
An invertible feedback block and a contraction argument establish locally
unique coupled solutions on a sufficiently short terminal interval for
every positive attenuation level. For a specified stochastic heat-equation
model at $\gamma=0.09$, a uniform invariant rectangle further proves
full-horizon existence, bounded infinite-dimensional feedbacks, and
operator-norm spectral convergence. Numerical computations reproduce the
projected gains and compare selected best responses. Global coupled
solvability for general coefficients remains an explicit hypothesis.
\end{abstract}

\begin{IEEEkeywords}
Infinite-dimensional systems, Stochastic $H_2/H_\infty$ control, Closed-loop strategies, Nash differential game, Operator coupled differential Riccati equations, Stochastic bounded real lemma
\end{IEEEkeywords}

\section{Introduction}

Mixed $H_2/H_\infty$ control \cite{zhou1996robust} is commonly used to describe the balance between nominal quadratic performance and robust disturbance attenuation. In the present finite-horizon stochastic setting, however, the $H_2$ channel is understood as an $H_2$-type stochastic LQ criterion that penalizes the regulated-output energy and the control energy, rather than as the Hilbert--Schmidt norm of an input--output operator. The $H_\infty$ requirement imposes an upper bound on the induced stochastic $L^2$-gain from the disturbance to the regulated output.

Existing approaches include convex and
linear-matrix-inequality  \cite{haddad2022mixed,rotea1991mixed,lin2006h2,scherer1997multiobjective},
Stackelberg \cite{jungers2008stackelberg}, and
Nash-game  \cite{limebeer1994nash,wang2023stochastic,zhang2017stochastic_nash}. Limebeer,
Anderson, and Hendel \cite{limebeer1994nash} developed a nonzero-sum Nash-game
approach for deterministic mixed control, and Chen and Zhang
\cite{chen2004stochastic} treated stochastic systems with state-dependent noise.
Related extensions include discrete-time infinite Markov jump systems
\cite{liu2017infinite} and continuous-time mean-field systems with
$(x,u,v)$-dependent noise \cite{wang2023stochastic}. These works are
finite-dimensional in their state description or concern a different time/noise
structure from the stochastic evolution equation studied here.

Open-loop controls may depend on the initial pair, whereas a closed-loop
strategy is prescribed independently of the initial state
\cite{yong1999stochastic,sunyong2020book}. Their solvability conditions differ
even for stochastic LQ control \cite{sun2016open}. Sun and Yong
\cite{sun2019linear} studied the corresponding distinction for finite-dimensional
nonzero-sum Nash games \cite{basar1999dynamic}.

For infinite-dimensional stochastic LQ control, L\"{u} \cite{lu2019well}
established equivalence between uniform convexity and unique strongly regular
mild Riccati solvability. Ma, Xiao, and Meng \cite{ma2025open} studied open-loop
and closed-loop saddle points for zero-sum evolution games with Poisson jumps.
Their mild-solution and Yosida-approximation methods guide the treatment of
unbounded generators here, but their game and noise structures differ from
the present Brownian-driven nonzero-sum problem. Related Hilbert-space
mean-field games are studied in \cite{liu2025hilbert}.

Distributed parameter systems naturally lead to Hilbert-space models
\cite{lasiecka2000control}. The operator Riccati and stochastic evolution
frameworks are classical \cite{fattorini1999infinite,da_prato2014stochastic,bensoussan1992stochastic},
as is the Yosida-regularized It\^o calculus \cite{govindan2016yosida}.
For $(x,u,v)$-dependent noise, Zhang, Zhang, and Chen \cite{zhang2006xuv}
derived cross-coupled Hamilton--Jacobi equations and, in the linear case,
matrix Riccati equations. Extending this formulation to an unbounded generator
requires mild operator equations, strong regularity, and justified quadratic
identities. Single-channel LQ theory does not establish global solvability of
the coupled pair.

This paper develops the following results for the finite-horizon mixed game:
\begin{itemize}
\item \textbf{Bounded-real characterization and mild verification:}
The strict disturbance gain condition is equivalent to unique strongly regular
mild Riccati solvability, by specialization of stochastic LQ uniform-convexity
theory. Simultaneous approximation of the Lyapunov equation and the state
justifies the quadratic identity used to verify the closed-loop Nash
inequalities and full-output strict attenuation.
\item \textbf{Local coupled solvability:}
An invertible feedback block near the zero terminal weights and a contraction
in the mild solution space yield a locally unique coupled Riccati pair for
every fixed $\gamma>0$. General prescribed-horizon solvability remains an
explicit hypothesis of the verification theorem.
\item \textbf{Global heat-equation example:}
For the specified coefficients at $\gamma=0.09$, a uniform invariant rectangle
proves existence on every finite horizon, bounded infinite-dimensional
feedbacks, and operator-norm spectral convergence. Numerical projections
illustrate the gains and selected best responses.
\end{itemize}

The remainder of this paper is organized as follows. Section II presents the basic operator model and preliminaries for infinite-dimensional stochastic control systems. Section III formulates the infinite-dimensional closed-loop mixed $H_2/H_\infty$ game problem. Section IV establishes the stochastic bounded real lemma. Section V verifies closed-loop Nash strategies and proves local coupled Riccati solvability. Section VI gives a heat-equation spectral-projection illustration. Section VII concludes the paper.

\section{Preliminaries}

This section fixes the functional setting, states the standing assumptions,
introduces the stochastic system, and recalls its well-posedness.
\subsection{Functional Setting and Notation}

Let $(\Omega,\mathcal F,\mathbb F,\mathbb P)$ be a complete filtered probability
space carrying a standard one-dimensional Brownian motion
$W(\cdot)=\{W(t)\}_{t\ge0}$. The filtration
$\mathbb F=\{\mathcal F_t\}_{t\ge0}$ is the natural filtration of $W$, augmented
by all $\mathbb P$-null sets in $\mathcal F$.

Fix a time horizon $T>0$. We use the following notation:

\begin{itemize}
\item $H,U,V,Z_1,Z_2$: Real separable Hilbert spaces for the state,
control, disturbance, and the two components of the performance output,
respectively.

\item $\mathcal L(X;Y)/\mathcal L(X)$: Spaces of bounded linear operators
from $X$ to $Y$ and from $X$ to itself, respectively. For
$F\in\mathcal L(X;Y)$, $F^*$ denotes its adjoint.

\item $\langle\cdot,\cdot\rangle_X/\|\cdot\|_X$: Inner product and norm
on $X$, respectively; the subscript is omitted when clear.

\item $\mathcal S(H)$: Space of bounded self-adjoint operators on $H$.
\item $W^{1,1}(t,T;E)$: Bochner--Sobolev space of $E$-valued functions
$f$ with $f,\dot f\in L^1(t,T;E)$ and an absolutely continuous
representative. For $E=\mathcal S(H)$, the norm is the operator norm.

\item $L^\infty(t,T;\mathcal L(X;Y))$: Strongly operator measurable
$\varphi:[t,T]\to\mathcal L(X;Y)$, meaning that $s\mapsto\varphi(s)x$
is measurable in $Y$ for every $x\in X$, with
$\operatorname*{ess\,sup}_{s\in[t,T]}\|\varphi(s)\|_{\mathcal L(X;Y)}<\infty$.
All Hilbert spaces are separable, so adjoint coefficients are strongly
operator measurable as well. This convention does not require Bochner
measurability in the operator norm.

\item $L^2(\Omega,\mathcal F_t,\mathbb P;X)$: Space of
$\mathcal F_t$-strongly measurable $X$-valued random variables $\xi$
with $\mathbb E\|\xi\|_X^2<\infty$.

\item $L^2_{\mathbb F}(t,T;X)$: Space of $\mathbb F$-predictable
$X$-valued processes $\varphi$ satisfying
$\mathbb E\int_t^T\|\varphi(s)\|_X^2\,ds<\infty$.

\item $C_{\mathbb F}([t,T];L^2(\Omega;X))$: Space of $\mathbb F$-adapted
processes $\varphi$ satisfying
$\varphi\in C([t,T];L^2(\Omega;X))$.
\end{itemize}

\subsection{Standing Assumptions and System Model}

We first specify the operator framework in which the state equation is posed.
\begin{assum} \label{assum:wellposed}
The operator $A:D(A)\subset H\to H$ generates a $C_0$-semigroup
$\{S(s)\}_{s\ge0}$ on $H$. The coefficient mappings are deterministic, strongly
measurable, and essentially bounded in the following spaces:
\[
\begin{aligned}
&A_1,A_2\in L^\infty(0,T;\mathcal L(H)),\\
&B_1,B_2\in L^\infty(0,T;\mathcal L(U;H)),\\
&C_1,C_2\in L^\infty(0,T;\mathcal L(V;H)),\\
&M\in L^\infty(0,T;\mathcal L(H;Z_1)).
\end{aligned}
\]
\end{assum}

\begin{assum} \label{assum:output}
The operator $N\in L^\infty(0,T;\mathcal L(U;Z_2))$ satisfies the control-weight
normalization
\[
N^*(s)N(s)=I_U,\qquad \text{for a.e. }s\in[0,T],
\]
where $I_U$ is the identity operator on $U$.
\end{assum}

For $t\in[0,T]$, define the admissible input spaces
$\mathcal U[t,T]\triangleq L^2_{\mathbb F}(t,T;U)$ and
$\mathcal V[t,T]\triangleq L^2_{\mathbb F}(t,T;V)$, and set
$Z\triangleq Z_1\times Z_2$. Given
$\xi\in L^2(\Omega,\mathcal F_t,\mathbb P;H)$,
$u\in\mathcal U[t,T]$, and $v\in\mathcal V[t,T]$, consider the
infinite-dimensional stochastic control system
\begin{equation} \label{eq:system_model}
\left\{
\begin{aligned}
dX(s)&=\big[(A+A_1(s))X(s)+B_1(s)u(s)\\
&\qquad+C_1(s)v(s)\big]\,ds\\
&\quad+\big[A_2(s)X(s)+B_2(s)u(s)\\
&\qquad+C_2(s)v(s)\big]\,dW(s),\\
z(s)&=\begin{pmatrix}M(s)X(s)\\N(s)u(s)\end{pmatrix},
\qquad X(t)=\xi .
\end{aligned}
\right.
\end{equation}
Its state is denoted by $X(\cdot;t,\xi,u(\cdot),v(\cdot))$, and its performance
output $z$ takes values in $Z$.

\begin{rem}
For the performance output in \eqref{eq:system_model},
Assumption~\ref{assum:output} gives
\[
\|z(s)\|_Z^2=\|M(s)X(s)\|_{Z_1}^2+\|u(s)\|_U^2.
\]
Thus, $N^*N=I_U$ simply normalizes the control-energy weight. If a model contains
a bounded coercive weight $N^*N=R(s)\ge rI_U$ instead, one may use the weighted
norm $\|u\|_{R(s)}^2=\langle R(s)u,u\rangle$ and replace the identity operators in
the $H_2$ terms by the corresponding weight. Hence the normalization
simplifies notation and imposes no essential restriction on the control cost.
\end{rem}

\subsection{Existence and Uniqueness of the Solution}

The following standard well-posedness result follows from the mild solution theory for stochastic evolution equations; see, for example, Lemma 3.1 in \cite{lu2019well} and the semigroup formulation in \cite{govindan2016yosida}.
\begin{thm}\label{thm:existence_uniqueness}
Suppose that Assumption \ref{assum:wellposed} holds. For any given initial condition $(t,\xi) \in [0,T] \times L^2(\Omega, \mathcal{F}_t, \mathbb{P}; H)$, control input $u(\cdot) \in \mathcal{U}[t,T]$, and external disturbance $v(\cdot) \in \mathcal{V}[t,T]$, the system \eqref{eq:system_model} admits a unique mild solution
\[
X(\cdot) \in C_{\mathbb{F}}([t,T]; L^2(\Omega; H)).
\]
Furthermore, there exists a constant $C > 0$, independent of $\xi$, $u(\cdot)$, and $v(\cdot)$, such that
\begin{equation}\label{eq:mild_estimate}
\begin{aligned}
&\|X(\cdot;t,\xi,u,v)\|^2_{C_{\mathbb F}([t,T];L^2(\Omega;H))}\\
&\quad\le C\bigl(\|\xi\|^2_{L^2(\Omega;H)}
+\|u\|^2_{\mathcal U[t,T]}+\|v\|^2_{\mathcal V[t,T]}\bigr).
\end{aligned}
\end{equation}
\end{thm}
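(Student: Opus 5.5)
We will prove Theorem~\ref{thm:existence_uniqueness} by a Banach fixed-point argument in the mild formulation, which is the standard route for stochastic evolution equations with a $C_0$-semigroup generator and bounded perturbations. Set $M_T\triangleq\sup_{0\le s\le T}\|S(s)\|_{\mathcal L(H)}<\infty$; this bound holds because $\{S(s)\}$ is a $C_0$-semigroup. By Assumption~\ref{assum:wellposed}, all coefficient norms are bounded by a common constant $K$. A mild solution is a process $X\in C_{\mathbb F}([t,T];L^2(\Omega;H))$ satisfying
\[
\begin{aligned}
X(s)&=S(s-t)\xi+\int_t^s S(s-r)\big[A_1X+B_1u+C_1v\big](r)\,dr\\
&\quad+\int_t^s S(s-r)\big[A_2X+B_2u+C_2v\big](r)\,dW(r).
\end{aligned}
\]
We call the right-hand side $\Gamma(X)(s)$.

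\textbf{Step 1: $\Gamma$ maps the space to itself.}
For $X\in C_{\mathbb F}([t,T];L^2(\Omega;H))$, the integrands are predictable and square integrable. Strong measurability of the coefficients, in the sense fixed in the notation, suffices here. We bound the deterministic convolution by Cauchy--Schwarz and the stochastic convolution by the It\^o isometry. This gives
\[
\mathbb E\|\Gamma(X)(s)\|^2\le 3M_T^2\|\xi\|^2+C(T,K,M_T)\Big(\int_t^s\mathbb E\|X\|^2dr+\|u\|^2+\|v\|^2\Big).
\]

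\textbf{Step 2: continuity in $L^2(\Omega;H)$.}
This is the step I expect to be the main technical point, since $S$ is only strongly continuous. Continuity of $s\mapsto S(s-t)\xi$ in $L^2(\Omega;H)$ follows from dominated convergence, using the bound $M_T$. For the stochastic convolution, write the increment from $s$ to $s+h$ as $\int_s^{s+h}S(s+h-r)\Phi(r)\,dW$ plus $\int_t^s[S(s+h-r)-S(s-r)]\Phi(r)\,dW$. The first term vanishes by the isometry and absolute continuity of the integral. The second has second moment $\mathbb E\int_t^s\|(S(s+h-r)-S(s-r))\Phi(r)\|^2dr$. This tends to $0$ by strong continuity of $S$ and dominated convergence with the dominating function $4M_T^2\|\Phi\|^2$. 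The Lebesgue integral term is handled in the same way, and left limits are treated symmetrically. Adaptedness is immediate.

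\textbf{Step 3: contraction and the estimate.}
For two processes $X,Y$, linearity gives
\[
\mathbb E\|\Gamma X(s)-\Gamma Y(s)\|^2\le L\int_t^s\mathbb E\|X(r)-Y(r)\|^2dr,
\]
with $L=2M_T^2K^2(T+1)$. We equip the space with the equivalent weighted norm $\sup_s e^{-\beta s}\big(\mathbb E\|X(s)\|^2\big)^{1/2}$, with $\beta>L$. Alternatively, we iterate $\Gamma$ to obtain the bound $L^n(s-t)^n/n!$. Either way $\Gamma$, or some power of it, is a contraction. This yields existence and uniqueness of the mild solution. For the estimate \eqref{eq:mild_estimate}, we apply the bound of Step~1 to the fixed point $X=\Gamma(X)$, which gives
\[
\mathbb E\|X(s)\|^2\le C_0\big(\|\xi\|^2+\|u\|^2+\|v\|^2\big)+C_1\int_t^s\mathbb E\|X(r)\|^2dr.
\]
Gronwall's inequality then yields \eqref{eq:mild_estimate}, with $C$ depending only on $T$, $K$ and $M_T$, and hence independent of $\xi$, $u$ and $v$.
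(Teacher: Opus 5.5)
Your proposal is correct and matches the paper, which gives no proof of its own: it cites standard mild-solution theory (Lemma~3.1 of \cite{lu2019well} and the semigroup formulation in \cite{govindan2016yosida}), and your contraction argument in $C_{\mathbb F}([t,T];L^2(\Omega;H))$, with It\^o isometry, strong continuity plus dominated convergence, and Gronwall, is the standard proof behind that citation. The one technicality you pass over is that an adapted, mean-square continuous $X$ admits a predictable modification, and this is what makes the stochastic convolution in $\Gamma(X)$ well defined on the space.
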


\subsection{Mild Operator Solutions and Optional Time Regularity}

Write $C_{\mathcal S}([t,T];\mathcal S(H))$ for uniformly bounded,
self-adjoint operator functions $P$ for which $s\mapsto P(s)x$ is continuous
for every $x\in H$. Its metric is the uniform operator norm. This is a
complete metric space; uniform operator-norm convergence preserves statewise
continuity. Integrals in a mild operator identity are taken after application
to each $x\in H$. No operator-norm differentiability is assumed.

For comparison with strong verification arguments, let $\mathcal R[t,T]$
denote the subclass of $W^{1,1}(t,T;\mathcal S(H))$ satisfying the relevant
weak identity on $D(A)$ and having bounded measurable feedbacks.
Membership in this smaller class is optional in the results below.
Strong regularity always means a separately stated, uniform positive lower
bound for the effective input weight; it is not implied by time regularity.

Subsequent analysis addresses closed-loop strategies for the infinite-dimensional mixed $H_2/H_\infty$ problem. Following the stochastic mixed $H_2/H_\infty$ formulation in \cite{wang2023stochastic}, 
the design objective is a mutual-best-response feedback pair with strict disturbance attenuation. The control best response is evaluated against a fixed disturbance feedback; no global minimum over all feasible controllers and their recomputed disturbance responses is claimed.

\section{Infinite-Dimensional Mixed \texorpdfstring{$H_2/H_\infty$}{H2/H-infinity} Closed-Loop Game}
\label{sec:game}

\subsection{Admissible Strategy Sets}
First, we define the sets of admissible closed-loop control strategies, $\mathcal{N}^2[t,T]$, and disturbance strategies, $\mathcal{M}^2[t,T]$, as follows:
\begin{align*}
\mathcal{N}^2[t,T] &\triangleq L^\infty(t, T; \mathcal{L}(H; U)), \\
\mathcal{M}^2[t,T] &\triangleq L^\infty(t, T; \mathcal{L}(H; V)).
\end{align*}
We consider closed-loop control and disturbance strategies in the form of linear bounded operator feedback:
\[
u(s) = K(s)X(s), \quad v(s) = L(s)X(s),
\]
where $K(s)$ and $L(s)$ are feedback gain operators satisfying $K(\cdot) \in \mathcal{N}^2[t, T]$ and $L(\cdot) \in \mathcal{M}^2[t, T]$.

\subsection{Perturbation Operator and \texorpdfstring{$H_2/H_\infty$}{H2/H-infinity} Performance Indices}
In the framework of $H_\infty$ control theory, the robustness of a system aims to suppress the energy amplification effect of the worst-case external disturbance on the regulated output. Therefore, assuming a zero initial state (i.e., $X(t)=0$) and a given admissible control feedback law $u(\cdot) = K(\cdot)X(\cdot)$, we define the linear perturbation operator $\mathcal{L}_K$ from the disturbance input $v(\cdot)$ to the performance output $z(\cdot)$ as:
\[
\mathcal{L}_K : L_{\mathbb{F}}^2(t, T; V) \to L_{\mathbb{F}}^2(t, T; Z),
\]
satisfying $\mathcal{L}_K(v) = z$.

The core objective of infinite-dimensional $H_\infty$ control is to find an admissible control strategy such that the induced norm (i.e., the $L_2$-gain) of this operator is strictly less than a prescribed disturbance attenuation level $\gamma > 0$:
\[
\|\mathcal{L}_K\| = \sup_{\substack{v \neq 0 \\ v \in L_{\mathbb{F}}^2(t,T;V)}} \frac{\|z\|_{L_{\mathbb{F}}^2(t,T;Z)}}{\|v\|_{L_{\mathbb{F}}^2(t,T;V)}} < \gamma,
\]
where the norms in the space of square-integrable predictable stochastic processes are defined by
\begin{align*}
\|v\|_{L_{\mathbb F}^2(t,T;V)}^2
&=\mathbb E\int_t^T\|v(s)\|_V^2\,ds,\\
\|z\|_{L_{\mathbb F}^2(t,T;Z)}^2
&=\mathbb E\int_t^T\bigl(\|M(s)X(s)\|_{Z_1}^2+\|u(s)\|_U^2\bigr)\,ds.
\end{align*}
For a prescribed attenuation level $\gamma > 0$, the $H_\infty$ robustness
cost functional $J_1$ penalizes disturbance energy and rewards output energy:

\[
\begin{aligned}
J_1(t,\xi;u,v)=\mathbb E\int_t^T
\bigl[&\gamma^2\|v(s)\|_V^2-\|M(s)X(s)\|_{Z_1}^2\\
&-\|u(s)\|_U^2\bigr]\,ds.
\end{aligned}
\]

Similarly, given the disturbance feedback strategy $v(s)=L(s)X(s)$, define the finite-horizon quadratic performance functional
\[
\begin{aligned}
J_2(t,\xi;u,v)=\mathbb E\int_t^T
\bigl[&\|M(s)X(s)\|_{Z_1}^2+\|u(s)\|_U^2\bigr]\,ds.
\end{aligned}
\]

For a fixed control feedback $K$, the disturbance minimizes the
$\gamma$-penalized functional $J_1$. For a fixed disturbance feedback $L$,
the controller minimizes $J_2$ over admissible predictable controls. We
\emph{define} the mixed design as the resulting nonzero-sum feedback Nash
problem; equivalence to a separate constrained global optimization problem
is not asserted.

\begin{rem}
The term ``worst-case disturbance'' below refers to a minimizing response for
the $\gamma$-penalized criterion, not to an input attaining the induced gain.
With zero initial state and strict attenuation, $J_1(0;KX,v)$ is uniformly
positive for $v\ne0$, so its minimizing response is $v=0$. The induced gain,
in contrast, is a supremum of an energy ratio over nonzero disturbances.
Moreover, the Nash inequality fixes $L$ during a control deviation; it does
not compare a competing controller with its newly computed disturbance
best response. The $H_2$ criterion is the LQ energy in this game and does not
assert nominal constrained $H_2$ optimality.
\end{rem}

\subsection{Closed-Loop Nash Equilibrium Strategy}

\begin{defn} \label{def:nash_equilibrium}
A pair of admissible closed-loop strategies $(K(\cdot),L(\cdot))$ is called a
closed-loop Nash equilibrium strategy if, for every initial pair
$(t,\xi)\in[0,T)\times L^2(\Omega,\mathcal F_t,\mathbb P;H)$, the associated state
$X^*$ and outcomes $u^*(s)=K(s)X^*(s)$, $v^*(s)=L(s)X^*(s)$ satisfy the following
two conditions:

(i) $H_\infty$ robustness criterion: Given the fixed control feedback operator $K(\cdot)$, for any predictable disturbance process $v(\cdot) \in L_{\mathbb{F}}^2(t, T; V)$, there holds
\begin{equation}\label{eq:nash_hinf}
J_1\Big(t, \xi; u^*(\cdot), v^*(\cdot)\Big) \le J_1\Big(t, \xi; K(\cdot)X(\cdot), v(\cdot)\Big),
\end{equation}
where $X(\cdot) \in C_{\mathbb{F}}([t,T]; L^2(\Omega;H))$ is the state trajectory generated by the fixed control feedback $K(\cdot)$ and the arbitrary disturbance $v(\cdot)$.

\vspace{1.5mm}
(ii) $H_2$ optimal regulation criterion: Given the fixed disturbance feedback operator $L(\cdot)$, for any predictable control process $u(\cdot) \in L_{\mathbb{F}}^2(t, T; U)$, there holds
\begin{equation}\label{eq:nash_h2}
J_2\Big(t, \xi; u^*(\cdot), v^*(\cdot)\Big) \le J_2\Big(t, \xi; u(\cdot), L(\cdot)X(\cdot)\Big),
\end{equation}
where $X(\cdot)$ is the state trajectory generated by the arbitrary control $u(\cdot)$ and the fixed disturbance feedback $L(\cdot)$.

In this case, the process pair $(u^*(\cdot), v^*(\cdot))$ is referred to as the closed-loop Nash equilibrium point.
\end{defn}

 \section{Stochastic Bounded Real Lemma}
 \label{sec:sbrl}

This section first proves the bounded real verification argument for the disturbance channel. By setting $u=0$ in system \eqref{eq:system_model}, we obtain an infinite-dimensional linear stochastic differential equation over a finite time horizon. For a given initial condition $(t,\xi) \in [0,T] \times L^2(\Omega, \mathcal{F}_t, \mathbb{P}; H)$, the system evolves as follows:
\begin{equation}\label{eq:disturbance_system}
\begin{cases}
\begin{aligned}
dX(s)&=\big((A+A_1)X(s)+C_1(s)v(s)\big)\,ds\\
&\quad+\big(A_2X(s)+C_2(s)v(s)\big)\,dW(s),
\end{aligned}\\
Z(s)=M(s)X(s),\qquad s\in[t,T],\\
X(t) = \xi.
\end{cases}
\end{equation}
This equation describes the evolution of the state $X(s)$ under the influence of the stochastic disturbance $v(s)$. Under the zero initial condition $X(t)=0$, we define the linear perturbation operator $\mathcal{L}: L_{\mathbb{F}}^2(t, T; V) \to L_{\mathbb{F}}^2(t, T; Z_1)$ for system \eqref{eq:disturbance_system} by
\[
(\mathcal{L}v)(s) = Z(s) = M(s)X(s).
\]
The induced norm of the operator $\mathcal{L}$ is defined as the $H_\infty$ norm of the system:
\[
\|\mathcal{L}\| = \sup_{\substack{v \in L_{\mathbb{F}}^2(t,T; V) \\ v \neq 0}} \frac{\|\mathcal{L}v\|_{L_{\mathbb{F}}^2(t,T; Z_1)}}{\|v\|_{L_{\mathbb{F}}^2(t,T; V)}}.
\]
Thus, $\|\mathcal L\|$ is the worst-case induced $L^2$ gain from the disturbance
$v(\cdot)$ to the output $Z(\cdot)$. A smaller gain means that less output energy
can be generated per unit of disturbance energy and therefore indicates stronger
disturbance attenuation.  For an
arbitrary initial pair
$(t,\xi)\in[0,T]\times L^2(\Omega,\mathcal F_t,\mathbb P;H)$, the worst-case
disturbance is characterized by the minimization problem
\begin{align*}
&\min_{v\in L_{\mathbb F}^2(t,T;V)}J_1(t,\xi;0,v)\\
&=\min_{v\in L_{\mathbb F}^2(t,T;V)}\mathbb E\int_t^T
 \bigl[\gamma^2\|v(s)\|_V^2-\|Z(s)\|_{Z_1}^2\bigr]\,ds\\
&=\min_{v\in L_{\mathbb F}^2(t,T;V)}\mathbb E\int_t^T
 \bigl[\langle\gamma^2I_Vv(s),v(s)\rangle\\
&\hspace{3.0cm}-\langle M^*(s)M(s)X(s),X(s)\rangle\bigr]\,ds.
\end{align*}

For the sake of notational simplicity, the time variable $s$ in the operators will be omitted in the sequel when no confusion arises. Since $A$ is unbounded, the following Riccati expressions are formal differential notations. They are used only through their weak form on $D(A)$ or through the corresponding mild identities obtained by the semigroup/Yosida approximation procedure. In particular, products such as $P(A+A_1)$ and $(A+A_1)^*P$ are not regarded as bounded operators on $H$ unless additional regularity is imposed.
\[
\begin{aligned}
\mathcal S(P)
&=\dot P+P(A+A_1)+(A+A_1)^*P\\
&\quad+A_2^*PA_2-M^*M,\\
\Lambda^\gamma(P)&=\gamma^2 I_V+C_2^*PC_2,\\
\Phi(P)&=PC_1+A_2^*PC_2.
\end{aligned}
\]
The formal completion-of-squares calculation leads to the following candidate operator Riccati equation:
\begin{equation}\label{eq:hinf_riccati}
\begin{cases}
\mathcal{S}(P) - \Phi(P)\big(\Lambda^\gamma(P)\big)^{-1}\Phi^*(P) = 0, \\
\Lambda^\gamma(P) \ge \delta I_V, \quad \delta > 0, \\
P(T) = 0.
\end{cases}
\end{equation}
For the necessary-and-sufficient bounded-real statement, we use the standard
mild-solution class from infinite-dimensional stochastic LQ theory. Denote by
$C_{\mathcal S}([t,T];\mathcal S(H))$ the set of uniformly bounded self-adjoint
operator-valued mappings $P$ such that $s\mapsto P(s)x$ is continuous in $H$ for
every $x\in H$.

\begin{defn} \textit{(Strongly regular mild Riccati solution)}
\label{def:strongly_regular_mild}
A mapping $P\in C_{\mathcal S}([t,T];\mathcal S(H))$ is called a mild solution of
\eqref{eq:hinf_riccati} if, for every $x\in H$ and $s\in[t,T]$,
\begin{equation}\label{eq:hinf_riccati_mild}
P(s)x=\int_s^T S(r-s)^*\mathfrak R(P(r))S(r-s)x\,dr,
\end{equation}
where the integral is understood in $H$ and
\[
\begin{aligned}
\mathfrak R(P)
&=PA_1+A_1^*P+A_2^*PA_2-M^*M\\
&-\Phi(P)\big(\Lambda^\gamma(P)\big)^{-1}\Phi(P)^*.
\end{aligned}
\]
It is called strongly regular if there is a constant $\delta>0$ such that
\[
\Lambda^\gamma(P(s))\ge\delta I_V\qquad\text{for a.e. }s\in[t,T].
\]
This is the specialization of the strongly regular mild solution in
\cite{lu2019well} to $B=C_1$, $C=A_2$, $D=C_2$, $Q=-M^*M$,
$R=\gamma^2I_V$, and terminal weight $G=0$.
\end{defn}

For formal block calculations on $D(A)\times V$, we denote
\begin{equation}\label{eq:psi_p}
\begin{aligned}
Q(P)&=\begin{pmatrix}
\mathcal S(P)&\Phi(P)\\
\Phi^*(P)&\Lambda^\gamma(P)
\end{pmatrix},\\
\Psi_P(s)&=-\bigl(\Lambda^\gamma(P(s))\bigr)^{-1}
\Phi^*(P(s)).
\end{aligned}
\end{equation}

\begin{lem} \textit{(Quadratic identity for a mild Lyapunov solution)}
\label{lem:general_mild_identity}
Let $F,G$ be bounded deterministic operator coefficients, and let $Q$ be
bounded, self-adjoint and strongly operator measurable. The terminal-zero
Lyapunov equation
\[
\dot D+D(A+F)+(A+F)^*D+G^*DG+Q=0,\qquad D(T)=0
\]
has a unique mild solution $D\in C_{\mathcal S}([t,T];\mathcal S(H))$.
For $\xi\in L^2(\Omega,\mathcal F_t;H)$ and predictable $f,g\in
L^2_{\mathbb F}(t,T;H)$, let
\[
dX=((A+F)X+f)\,ds+(GX+g)\,dW,\qquad X(t)=\xi.
\]
Then
\begin{equation}\label{eq:general_mild_identity}
\begin{aligned}
\mathbb E\int_t^T\langle QX,X\rangle\,ds
&=\mathbb E\langle D(t)\xi,\xi\rangle\\
&\quad+\mathbb E\int_t^T\bigl(2\langle DX,f\rangle
+2\langle DGX,g\rangle\bigr)\,ds\\
&\quad+\mathbb E\int_t^T\langle Dg,g\rangle\,ds.
\end{aligned}
\end{equation}
In particular, this identity requires neither $D\in\mathcal R[t,T]$ nor
a direct It\^o formula for a mild state.
\end{lem}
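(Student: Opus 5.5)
Our plan has three steps: existence and uniqueness of the mild Lyapunov solution by a fixed point, the identity for Yosida-regularized problems by It\^o's formula, and a limit passage.

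\textbf{Step 1: existence and uniqueness.} We write the mild form
\[
D(s)x=\int_s^T S(r-s)^*\bigl(D(r)F(r)+F(r)^*D(r)+G(r)^*D(r)G(r)+Q(r)\bigr)S(r-s)x\,dr .
\]
The right-hand side is an affine map $\Gamma$ on $C_{\mathcal S}([t,T];\mathcal S(H))$. Strong continuity of $s\mapsto \Gamma(D)(s)x$ follows from strong continuity of $S$ and $S^*$ together with dominated convergence. Self-adjointness is evident. With $\|S(\tau)\|\le Me^{\omega\tau}$, the difference $\Gamma(D)-\Gamma(\tilde D)$ is bounded in operator norm by $c\int_s^T\|D-\tilde D\|(r)\,dr$. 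Hence $\Gamma$ is a contraction in an exponentially weighted sup norm $\sup_s e^{-\beta(T-s)}\|D(s)\|$, and Banach's fixed point theorem gives a unique mild solution.

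\textbf{Step 2: the identity for the regularized problem.} Let $A_\lambda=\lambda A(\lambda-A)^{-1}$ be the Yosida approximations. We solve the Lyapunov equation with $A_\lambda$ in place of $A$. It is now a linear ODE in $\mathcal S(H)$ with bounded, strongly measurable coefficients, whose solution $D_\lambda$ satisfies the same mild identity with $S_\lambda(\cdot)=e^{\cdot A_\lambda}$. We also let $X_\lambda$ solve the state equation driven by $A_\lambda$. This is a strong solution of an SDE with bounded coefficients, so It\^o's formula for $\langle D_\lambda X_\lambda,X_\lambda\rangle$ is legitimate. Working in the weak sense applied to vectors, or via the mild $W^{1,1}$ representation in the strong operator topology, we compute
\[
d\langle D_\lambda X_\lambda,X_\lambda\rangle=\bigl[-\langle QX_\lambda,X_\lambda\rangle+2\langle D_\lambda X_\lambda,f\rangle+2\langle D_\lambda GX_\lambda,g\rangle+\langle D_\lambda g,g\rangle\bigr]ds+dM .
\]
Here $M$ is a martingale, because all processes are square integrable and $D_\lambda$ is uniformly bounded. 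Since $D_\lambda(T)=0$, taking expectations and integrating from $t$ to $T$ yields \eqref{eq:general_mild_identity} with $(D_\lambda,X_\lambda)$ in place of $(D,X)$.

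\textbf{Step 3: passing to the limit $\lambda\to\infty$.} This is the main obstacle, since we only have strong (not norm) convergence $S_\lambda(\tau)x\to S(\tau)x$, uniformly on compacts, with $\sup_{\lambda\ge\lambda_0}\|S_\lambda(\tau)\|\le Me^{2\omega\tau}$.

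For the state, the standard estimate for mild solutions gives $X_\lambda\to X$ in $C_{\mathbb F}([t,T];L^2(\Omega;H))$. This follows by writing the difference of the mild formulas, applying Gronwall's inequality, and using dominated convergence for the terms $(S_\lambda-S)(\cdot)\xi$ and the stochastic convolutions of $f,g$.

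For the operators, we show $D_\lambda(s)x\to D(s)x$ uniformly in $s$ for each $x$, with $\sup_\lambda\sup_s\|D_\lambda(s)\|<\infty$ by Gronwall applied to the mild equations. Subtracting the two mild identities, the terms involving $D_\lambda-D$ are controlled through a Gronwall argument applied to the quantity $\|(D_\lambda(s)-D(s))y\|$. The complication is that these terms involve the vectors $S(r-s)x$ and $F(r)S(r-s)x$, which range over sets that are compact when $x$ is fixed. We therefore work with $\sup_{y\in\mathcal K}\|(D_\lambda-D)(r)y\|$ over a suitable compact set $\mathcal K$ generated from $x$. If that becomes awkward, we fall back to weak convergence, which suffices since we only need convergence of quadratic quantities. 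The remaining error terms, of the form $(S_\lambda-S)$ applied to fixed vectors, tend to zero by dominated convergence.

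Finally, we pass to the limit in each term of the regularized identity. The key device is to combine strong convergence of $D_\lambda$ with uniform bounds and $L^2$ convergence of $X_\lambda$. For example,
\[
\langle D_\lambda X_\lambda,f\rangle-\langle DX,f\rangle=\langle X_\lambda-X,D_\lambda f\rangle+\langle X,(D_\lambda-D)f\rangle ,
\]
where the second term tends to zero pointwise in $(s,\omega)$ and is dominated, so dominated convergence in $\mathbb E\int$ applies. The terms $\mathbb E\langle D_\lambda(t)\xi,\xi\rangle$ and $\mathbb E\int\langle D_\lambda g,g\rangle\,ds$ converge in the same way. This gives \eqref{eq:general_mild_identity} for the mild pair $(D,X)$, using neither $D\in\mathcal R[t,T]$ nor an It\^o formula for the mild state directly.
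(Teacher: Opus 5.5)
Your architecture matches the paper's: a fixed point for the mild Lyapunov equation, Yosida regularization of both the operator equation and the state, It\^o's formula for the regularized pair, and a limit passage. The gap is in Step~3: it never proves $D_\lambda(s)\to D(s)$ in any topology.

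Subtracting the two mild equations produces terms such as $S(r-s)^*\bigl(D_\lambda(r)-D(r)\bigr)F(r)S(r-s)x$ and $S(r-s)^*G(r)^*\bigl(D_\lambda(r)-D(r)\bigr)G(r)S(r-s)x$. So estimating $\|(D_\lambda(s)-D(s))x\|$ requires control of $D_\lambda(r)-D(r)$ on the vectors $F(r)S(r-s)x$ and $G(r)S(r-s)x$. At the next stage you need it on their images under the same maps, and so on. Your compactness claim holds for $\{S(\tau)x\}$ but not for $\{F(r)S(\tau)x\}$, since $F$ is merely bounded and strongly measurable. A set invariant under all these maps is, in general, not even bounded. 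Hence the Gronwall inequality closes only in operator norm, and there $D_\lambda\to D$ fails in general because $S_\lambda\to S$ only strongly. You are right that weak convergence with uniform bounds would suffice for the final limit passage. However, proving weak convergence by the same Gronwall route meets exactly the same obstruction.

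The paper avoids Gronwall here and instead expands the linear mild equation in its Volterra series $D_\lambda=\sum_m D^{(m)}_\lambda$. Here $D^{(0)}_\lambda(s)x=\int_s^T S_\lambda(r-s)^*Q(r)S_\lambda(r-s)x\,dr$, and $D^{(m+1)}_\lambda$ is the linear part of the mild map applied to $D^{(m)}_\lambda$. Each iterate converges strongly by induction. If $D^{(m)}_\lambda(r)y\to D^{(m)}(r)y$ for \emph{all} $r,y$ with a uniform bound, then the integrand of $D^{(m+1)}_\lambda(s)x$ converges pointwise in $r$. This uses strong convergence of both $S_\lambda$ and $S_\lambda^*$; you also need the latter, which holds since $A_\lambda^*=(A^*)_\lambda$. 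Dominated convergence finishes each iterate, and the bound $C_0(C_1(T-t))^m/m!$, uniform in $\lambda$, lets you sum the series.

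Alternatively, you can repair Step~3 with tools you already have. Step~2 with $f=g=0$ gives $\langle D_\lambda(s)x,x\rangle=\mathbb E\int_s^T\langle QY^{s,x}_\lambda,Y^{s,x}_\lambda\rangle\,dr$. State convergence $Y^{s,x}_\lambda\to Y^{s,x}$ then makes these forms converge, and polarization yields a weak limit $\widetilde D(s)$. Pass to the limit in the paired mild equation. The outer vectors $S_\lambda(r-s)x$ and $S_\lambda(r-s)y$ converge strongly, and the middle operator is bounded and weakly convergent. This shows $\widetilde D$ is a mild solution, so $\widetilde D=D$ by Step~1.

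A smaller point concerns Step~2. Because $F,G,Q$ are only strongly measurable, $D_\lambda$ is norm-Lipschitz but has only an a.e.\ strong derivative. Your It\^o step therefore needs a short justification, such as the paper's time mollification or a partition argument using path continuity of $X_\lambda$.
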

\begin{pf}
The mild integral map is a contraction on a sufficiently short terminal
interval in the uniform operator norm: its linear part has Lipschitz
constant at most $C h(2\|F\|_\infty+\|G\|_\infty^2)$ on an interval of
length $h$. Backward continuation yields existence, uniqueness and a
uniform bound on the full finite interval.

Let $A_n=nA(nI-A)^{-1}$, for sufficiently large $n$, and let $S_n$ be its
semigroup. Yosida approximation gives a common semigroup bound on $[0,T]$
and strong convergence of $S_n$ and $S_n^*$, uniform on compact time
intervals. Let $D_n$ solve the \emph{Lyapunov equation with generator $A_n$}
and the same $F,G,Q$, and let $X_n$ solve the state equation with $A_n$ and
the same $\xi,f,g$. Thus both the operator equation and the state are
approximated; we do not apply a weak derivative of $D$ to a random vector.

For each fixed $n$, the mild equation for $D_n$ implies norm Lipschitz
continuity and a strongly operator measurable bounded derivative satisfying
the bounded-generator equation. Extend $D_n$ constantly outside $[t,T]$
and mollify in time. The mollified operators are smooth in operator norm,
converge uniformly to $D_n$, and their derivatives converge strongly a.e.
to its strong derivative, with a bound depending on $n$. The last assertion
follows first on a countable dense subset of $H$ from vector-valued Lebesgue
differentiation and then on all of $H$ from the uniform derivative bound.
Apply the classical Hilbert-space It\^o formula to the mollified quadratic
forms and $X_n$ and let the mollification parameter tend to zero. This
proves \eqref{eq:general_mild_identity} with $D_n,X_n$.

The expectation step can first be performed for bounded $\xi$ and bounded
predictable $f,g$. For fixed $n$, the corresponding state has fourth
moments on the finite horizon, so the quadratic stochastic integral is
integrable after the usual localization. General $L^2$ data follow by
truncation: the state estimate and Cauchy--Schwarz show continuity of every
term in \eqref{eq:general_mild_identity}. No fourth-moment assumption is
thereby imposed on the original data.

For completeness, the operator convergence needed in the last limit is
strong convergence, not convergence in operator norm. Expand the mild
Lyapunov equation in its Volterra series. Its $m$th linear iterate is
bounded by $C_0(C_1(T-t))^m/m!$, uniformly in $n$. In each fixed iterate,
the strong convergence of $S_n,S_n^*$ and dominated convergence give
$D_n(s)x\to D(s)x$ for every $s,x$. The uniformly summable bounds permit
passage from the finite iterates to the full solution and also give
$\sup_{n,s}\|D_n(s)\|<\infty$.
The state approximation satisfies
$X_n\to X$ in $C([t,T];L^2(\Omega;H))$: this follows by the mild
equations, It\^o isometry, the common semigroup bound and Gronwall's
inequality, first for simple data and then by their $L^2$ density.
Strong convergence of uniformly bounded $D_n(s)$ extends to random
$L^2$ vectors by simple-vector approximation. Dominated convergence and
Cauchy--Schwarz therefore pass each term of the bounded-generator
identity to the limit. This proves \eqref{eq:general_mild_identity}.
\end{pf}

\begin{lem} \textit{(Mild Riccati quadratic identity)} \label{lem:j1_expansion}
Let $P$ be a strongly regular mild solution of \eqref{eq:hinf_riccati}. Let $X(\cdot)$ be the mild solution of \eqref{eq:disturbance_system} corresponding to $\xi\in L^2(\Omega,\mathcal F_t,\mathbb P;H)$ and $v(\cdot)\in\mathcal V[t,T]$. Then the following quadratic identity holds:
\begin{equation}\label{eq:j1_identity}
\begin{aligned}
J_1(t, \xi; 0, v(\cdot))
&= \mathbb{E}\langle \xi, P(t)\xi \rangle- \mathbb{E}\langle X(T), P(T)X(T) \rangle \\
&\hspace{-0.3cm}\quad + \mathbb{E} \int_t^T\Big\langle\Lambda^\gamma(P(s))
\big(v(s)-\Psi_P(s)X(s)\big),\\
&\quad v(s)-\Psi_P(s)X(s) \Big\rangle_V\,ds.
\end{aligned}
\end{equation}
\end{lem}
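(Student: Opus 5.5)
The plan is to reduce the claim to Lemma~\ref{lem:general_mild_identity} by viewing the mild Riccati equation \eqref{eq:hinf_riccati_mild} as a terminal-zero mild Lyapunov equation with frozen coefficients. Since $P$ is strongly regular, $\Lambda^\gamma(P(s))\ge\delta I_V$, so its inverse is bounded by $\delta^{-1}$. Hence $\Psi_P\in L^\infty(t,T;\mathcal L(H;V))$ is strongly operator measurable and bounded, because $P$ is strongly continuous and bounded.

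Take $F=A_1$, $G=A_2$ and
\[
Q=-\bigl(\mathfrak R(P)-PA_1-A_1^*P-A_2^*PA_2\bigr)=M^*M+\Phi(P)\Lambda^\gamma(P)^{-1}\Phi(P)^*.
\]
This $Q$ is bounded, self-adjoint and strongly measurable. With this choice, \eqref{eq:hinf_riccati_mild} reads
\[
P(s)x=\int_s^T S(r-s)^*\bigl(PA_1+A_1^*P+A_2^*PA_2-Q\bigr)(r)S(r-s)x\,dr .
\]
Up to the sign convention of the lemma's Lyapunov equation, this is exactly its mild form. More precisely, $D=-P$ solves $\dot D+D(A+F)+(A+F)^*D+G^*DG+Q=0$ with $D(T)=0$ in the mild sense. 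Uniqueness of the mild Lyapunov solution (first part of Lemma~\ref{lem:general_mild_identity}) identifies $D$ with $-P$. Before relying on this sign, I will check carefully that the lemma's mild formulation is $D(s)=\int_s^T S^*(\ldots)S\,dr$ with the same orientation.

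Next apply the identity \eqref{eq:general_mild_identity} to the state of \eqref{eq:disturbance_system}, taking $f=C_1v$ and $g=C_2v$, both in $L^2_{\mathbb F}(t,T;H)$. With $D=-P$ this gives
\[
\mathbb E\!\int_t^T\!\langle QX,X\rangle ds=-\mathbb E\langle P(t)\xi,\xi\rangle-\mathbb E\!\int_t^T\!\bigl(2\langle PX,C_1v\rangle+2\langle PA_2X,C_2v\rangle+\langle PC_2v,C_2v\rangle\bigr)ds .
\]
The cross terms equal $2\langle \Phi(P)^*X,v\rangle$.

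Then expand $J_1(t,\xi;0,v)=\mathbb E\int(\gamma^2\|v\|^2-\langle M^*MX,X\rangle)$. Substitute $\langle M^*MX,X\rangle=\langle QX,X\rangle-\langle \Phi\Lambda^{-1}\Phi^*X,X\rangle$ and use the previous display. This leaves
\[
J_1=\mathbb E\langle P(t)\xi,\xi\rangle+\mathbb E\int_t^T\bigl(\langle\Lambda^\gamma v,v\rangle+2\langle\Phi^*X,v\rangle+\langle\Phi\Lambda^{-1}\Phi^*X,X\rangle\bigr)ds .
\]
The integrand is exactly the completed square $\langle\Lambda^\gamma(v-\Psi_PX),v-\Psi_PX\rangle$, since $\Psi_P=-\Lambda^{-1}\Phi^*$. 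Finally, $P(T)=0$, so the term $-\mathbb E\langle X(T),P(T)X(T)\rangle$ vanishes and is included only formally. This yields \eqref{eq:j1_identity}.

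The main obstacle is the bookkeeping of the first step. The nonlinear term $\Phi\Lambda^{-1}\Phi^*$ must be treated as part of a fixed, known bounded forcing $Q$, which is legitimate because $P$ is already given. One must also confirm that this $Q$ satisfies the measurability and boundedness hypotheses of Lemma~\ref{lem:general_mild_identity}. Strong regularity is essential here: it gives bounded invertibility of $\Lambda^\gamma$, and strong measurability of $s\mapsto\Lambda^\gamma(P(s))^{-1}$ follows from the Neumann-series or resolvent representation of the inverse. All remaining integrability of the terms follows from Theorem~\ref{thm:existence_uniqueness} and Cauchy--Schwarz.
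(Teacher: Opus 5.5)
Your proposal is correct and follows the paper's proof. Both arguments apply Lemma~\ref{lem:general_mild_identity} with $F=A_1$, $G=A_2$, $f=C_1v$, $g=C_2v$, treat $\Phi(P)\Lambda^\gamma(P)^{-1}\Phi(P)^*$ as a frozen bounded forcing, add the disturbance energy, and complete the square via $\Lambda^\gamma(P)\Psi_P=-\Phi(P)^*$. The paper takes $D=P$ with $Q=-M^*M-\Phi(P)\Lambda^\gamma(P)^{-1}\Phi(P)^*$ directly, while your choice $D=-P$ with $Q=M^*M+\Phi\Lambda^{-1}\Phi^*$ is the same identity multiplied by $-1$; your sign check is consistent with the paper's mild convention $D(s)x=\int_s^T S(r-s)^*(\cdots)S(r-s)x\,dr$.
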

\begin{pf}
Apply Lemma~\ref{lem:general_mild_identity} with $F=A_1$, $G=A_2$,
$f=C_1v$, $g=C_2v$, and
\[
Q=-M^*M-\Phi(P)(\Lambda^\gamma(P))^{-1}\Phi(P)^*.
\]
The mild Riccati equation is exactly the Lyapunov equation with this
bounded strongly measurable forcing. Add the disturbance energy to the
identity and complete the square using
$\Lambda^\gamma(P)\Psi_P=-\Phi(P)^*$. This gives
\eqref{eq:j1_identity}; $P(T)=0$ in the present problem.
\end{pf}

\begin{proposition} \textit{(Feedback input transformation)}
\label{prop:feedback_input_isomorphism}
Suppose Assumption~\ref{assum:wellposed} holds and let
$\Psi\in L^\infty(t,T;\mathcal L(H;V))$. For $v\in\mathcal V[t,T]$, let
$X^v=X(\cdot;t,0,0,v)$ be the mild solution of
\eqref{eq:disturbance_system}, and define
\[
(\Gamma_\Psi v)(s)=v(s)-\Psi(s)X^v(s).
\]
Then $\Gamma_\Psi$ is a bounded linear isomorphism of $\mathcal V[t,T]$.
More precisely, for every $h\in\mathcal V[t,T]$, let $X_h$ be the unique mild
solution of
\begin{equation}\label{eq:feedback_inverse_state}
\begin{cases}
\begin{aligned}
dX_h(s)&=\big[(A+A_1+C_1\Psi)X_h(s)+C_1h(s)\big]ds\\
&\quad+\big[(A_2+C_2\Psi)X_h(s)+C_2h(s)\big]dW(s),
\end{aligned}\\
X_h(t)=0.
\end{cases}
\end{equation}
Then
\[
(\Gamma_\Psi^{-1}h)(s)=h(s)+\Psi(s)X_h(s).
\]
Consequently, there are constants $m_\Psi,M_\Psi>0$ such that
\begin{equation}\label{eq:feedback_isomorphism_bounds}
m_\Psi\|v\|_{\mathcal V[t,T]}
\le\|\Gamma_\Psi v\|_{\mathcal V[t,T]}
\le M_\Psi\|v\|_{\mathcal V[t,T]}.
\end{equation}
\end{proposition}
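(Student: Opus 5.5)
The plan is to show that $\Gamma_\Psi$ is bounded, and that the explicitly defined map $\Theta h\triangleq h+\Psi X_h$ is a two-sided inverse. The estimates \eqref{eq:feedback_isomorphism_bounds} then follow from the boundedness of both maps.

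\textbf{Boundedness of $\Gamma_\Psi$.} By Theorem~\ref{thm:existence_uniqueness} with $\xi=0$ and $u=0$, the map $v\mapsto X^v$ is linear. The estimate \eqref{eq:mild_estimate} gives $\sup_s\mathbb E\|X^v(s)\|^2\le C\|v\|^2_{\mathcal V[t,T]}$. Since $\Psi$ is essentially bounded, $\|\Psi X^v\|_{\mathcal V[t,T]}^2\le \|\Psi\|_\infty^2 (T-t)C\|v\|^2$. This yields the upper constant $M_\Psi=1+\|\Psi\|_\infty\sqrt{C(T-t)}$.

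Predictability of $\Psi X^v$ needs a short remark. $\Psi$ is only strongly operator measurable, and $X^v$ is adapted and mean-square continuous, so it has a predictable version. Approximating $X^v$ by simple predictable processes and using the strong measurability of $\Psi$, separability, and the uniform bound on $\Psi$ shows $\Psi X^v\in\mathcal V[t,T]$.

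\textbf{Boundedness of $\Theta$.} Equation \eqref{eq:feedback_inverse_state} is of the form \eqref{eq:system_model} with bounded coefficients $A_1+C_1\Psi$ and $A_2+C_2\Psi$, control $u=0$, and disturbance $h$. These coefficients are again strongly measurable and essentially bounded. Hence Theorem~\ref{thm:existence_uniqueness} applies and gives a unique $X_h$ with $\sup_s\mathbb E\|X_h(s)\|^2\le C'\|h\|^2$. As above, $\Theta$ is bounded on $\mathcal V[t,T]$.

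\textbf{Two-sided inverse.} This is the main step, and it rests on uniqueness of mild solutions.

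First, given $h$, set $v=\Theta h=h+\Psi X_h$. Then $C_1v=C_1\Psi X_h+C_1h$ and $C_2v=C_2\Psi X_h+C_2h$. So the mild equation for $X_h$ in \eqref{eq:feedback_inverse_state} is exactly the mild equation of \eqref{eq:disturbance_system} with disturbance $v$ and zero initial state. This is checked by rearranging the integrals $\int S(s-r)(\cdots)\,dr$ and $\int S(s-r)(\cdots)\,dW$, which is a purely linear regrouping of bounded terms. Uniqueness in Theorem~\ref{thm:existence_uniqueness} gives $X^v=X_h$, hence $\Gamma_\Psi\Theta h=v-\Psi X^v=h$.

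Conversely, given $v$, set $h=\Gamma_\Psi v$. Then $v=h+\Psi X^v$, so $X^v$ solves the mild form of \eqref{eq:feedback_inverse_state} with forcing $h$. Uniqueness for that equation gives $X_h=X^v$, hence $\Theta\Gamma_\Psi v=h+\Psi X^v=v$.

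The only delicate point is to state these regroupings at the level of mild solutions rather than through a formal It\^o differential. This is legitimate because all rearranged terms involve bounded operators applied to $L^2$ processes, so the stochastic convolutions split linearly.

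\textbf{Conclusion.} $\Gamma_\Psi$ is a bounded linear bijection with bounded inverse $\Theta$. Setting $m_\Psi=\|\Theta\|^{-1}$ gives $\|v\|=\|\Theta\Gamma_\Psi v\|\le\|\Theta\|\,\|\Gamma_\Psi v\|$, which is the lower bound. Together with $M_\Psi=\|\Gamma_\Psi\|$, this proves \eqref{eq:feedback_isomorphism_bounds}. Boundedness of the inverse also follows from the open mapping theorem, but the explicit formula makes that unnecessary.
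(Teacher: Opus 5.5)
Your proposal is correct and follows essentially the same route as the paper: boundedness of $\Gamma_\Psi$ from the mild estimate, surjectivity by observing that $X_h$ is the mild state of the input $h+\Psi X_h$, and a bounded inverse from the explicit formula with $m_\Psi=\|\Gamma_\Psi^{-1}\|^{-1}$. Your left-inverse step is the paper's injectivity argument (the homogeneous closed-loop equation with zero data) carried out for general $h$. Your predictability remark on $\Psi X^v$ supplies a detail the paper leaves implicit.
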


\begin{pf}
Linearity is immediate. By Theorem~\ref{thm:existence_uniqueness},
\[
\|X^v\|_{L^2_{\mathbb F}(t,T;H)}\le C\|v\|_{\mathcal V[t,T]},
\]
and hence $\Gamma_\Psi$ is bounded. Given $h\in\mathcal V[t,T]$, the
coefficients $A_1+C_1\Psi$ and $A_2+C_2\Psi$ in
\eqref{eq:feedback_inverse_state} are bounded and strongly measurable. Thus the
standard mild well-posedness theorem, applied with the original semigroup
generated by $A$, gives a unique $X_h$ and
\[
\|X_h\|_{L^2_{\mathbb F}(t,T;H)}\le C_\Psi\|h\|_{\mathcal V[t,T]}.
\]
Define $v=h+\Psi X_h$. Substitution into the original disturbance equation shows
that its state $X^v$ satisfies \eqref{eq:feedback_inverse_state}; uniqueness gives
$X^v=X_h$ and hence $\Gamma_\Psi v=h$. This proves surjectivity. If
$\Gamma_\Psi v=0$, the same argument gives the homogeneous closed-loop equation
with zero initial state, so $X^v=0$ and $v=0$. Thus $\Gamma_\Psi$ is injective.
Finally,
\[
\|\Gamma_\Psi^{-1}h\|_{\mathcal V[t,T]}
\le\big(1+\|\Psi\|_\infty C_\Psi\big)\|h\|_{\mathcal V[t,T]},
\]
which proves boundedness of the inverse and
\eqref{eq:feedback_isomorphism_bounds}. In particular one may take
$m_\Psi=\|\Gamma_\Psi^{-1}\|^{-1}>0$.
\end{pf}

We next establish the estimates needed to construct a Riccati solution from the
strict-gain condition. The first identity is stated for mild Lyapunov solutions so
that it can be applied to every iterate in the operator-valued approximation below.

\begin{lem} \textit{(Mild Lyapunov representation and uniform operator estimate)}
\label{lem:mild_lyapunov_uniform_estimate}
Let $F,G\in L^\infty(t,T;\mathcal L(H))$ and
$Q\in L^\infty(t,T;\mathcal S(H))$. The terminal-value Lyapunov equation
\begin{equation}\label{eq:abstract_mild_lyapunov}
\begin{cases}
\dot D+D(A+F)+(A+F)^*D+G^*DG+Q=0,\\
D(T)=0,
\end{cases}
\end{equation}
has a unique mild solution $D\in C_{\mathcal S}([t,T];\mathcal S(H))$. If
$Y^{s,x}$ is the mild solution of
\[
\begin{aligned}
dY(r)&=(A+F(r))Y(r)\,dr+G(r)Y(r)\,dW(r),\\
Y(s)&=x,
\end{aligned}
\]
then
\begin{equation}\label{eq:positive_lyapunov_representation}
\langle D(s)x,x\rangle
=\mathbb E\int_s^T\langle Q(r)Y^{s,x}(r),Y^{s,x}(r)\rangle\,dr.
\end{equation}
In particular, $Q\ge0$ implies $D\ge0$, and there is a constant
$C_L=C_L(T,S,\|F\|_\infty,\|G\|_\infty)$ such that
\begin{equation}\label{eq:uniform_lyapunov_estimate}
\|D(s)\|_{\mathcal L(H)}
\le C_L\int_s^T\|Q(r)\|_{\mathcal L(H)}\,dr,
\qquad s\in[t,T].
\end{equation}
For a family $(F_j,G_j)$ bounded uniformly in $L^\infty$, the same $C_L$ may be
used for every $j$.
\end{lem}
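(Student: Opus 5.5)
Existence and uniqueness of the mild solution $D\in C_{\mathcal S}([t,T];\mathcal S(H))$ is already contained in Lemma~\ref{lem:general_mild_identity}. That lemma only requires $F,G$ bounded deterministic and $Q$ bounded self-adjoint strongly measurable, which covers the present hypotheses. I will recall the short-interval contraction with backward continuation only to record that the contraction constant depends on $T$, the semigroup bound $\sup_{r\le T}\|S(r)\|$, and $\|F\|_\infty,\|G\|_\infty$. It does not depend on $Q$.

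For the representation \eqref{eq:positive_lyapunov_representation}, I fix $s\in[t,T]$ and $x\in H$. I then apply the identity \eqref{eq:general_mild_identity} on the interval $[s,T]$ with deterministic initial datum $\xi=x$ and $f=g=0$. The state in that lemma is then exactly $Y^{s,x}$, and all cross terms and the $\langle Dg,g\rangle$ term vanish. The identity therefore reads $\mathbb E\int_s^T\langle QY,Y\rangle\,dr=\langle D(s)x,x\rangle$. Applying it on $[s,T]$ instead of $[t,T]$ is legitimate because the restriction of the mild solution on $[t,T]$ to $[s,T]$ is the mild solution there, by uniqueness. Positivity follows at once: if $Q\ge0$ the integrand is nonnegative, so $\langle D(s)x,x\rangle\ge0$.

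For the estimate \eqref{eq:uniform_lyapunov_estimate}, I use the standard mild bound of Theorem~\ref{thm:existence_uniqueness}, specialized to the homogeneous equation with coefficients $F,G$. This gives $\sup_{r\in[s,T]}\mathbb E\|Y^{s,x}(r)\|^2\le C_0\|x\|^2$. Here $C_0$ comes from It\^o isometry and Gronwall and depends only on $T$, the semigroup bound, $\|F\|_\infty$ and $\|G\|_\infty$. Hence $|\langle D(s)x,x\rangle|\le C_0\|x\|^2\int_s^T\|Q(r)\|\,dr$. Since $D(s)$ is self-adjoint, $\|D(s)\|=\sup_{\|x\|=1}|\langle D(s)x,x\rangle|$, which yields the bound with $C_L=C_0$.

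Uniformity over a family $(F_j,G_j)$ follows because $C_0$ depends on these coefficients only through their $L^\infty$ bounds. The one point I expect to need care is making explicit that the Gronwall constant in Theorem~\ref{thm:existence_uniqueness} depends on the coefficients only through those norms. I would check this by writing the mild equation for $Y$ and estimating $\mathbb E\|Y(r)\|^2\le 3M_S^2\|x\|^2+3M_S^2(T\|F\|_\infty^2+\|G\|_\infty^2)\int_s^r\mathbb E\|Y\|^2$, where $M_S=\sup_{r\le T}\|S(r)\|$. Gronwall then gives $C_0=3M_S^2\exp\bigl(3M_S^2T(T\|F\|_\infty^2+\|G\|_\infty^2)\bigr)$.
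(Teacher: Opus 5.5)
Your proposal is correct and follows essentially the same route as the paper: existence and uniqueness from Lemma~\ref{lem:general_mild_identity}, the identity with $f=g=0$ applied to $Y^{s,x}$ for the representation, and the homogeneous mild-state bound plus self-adjointness of $D(s)$ for the norm estimate, with uniformity because the constant depends on $F,G$ only through their $L^\infty$ bounds. Your extra details, namely restricting to $[s,T]$ via uniqueness and the explicit constant $C_0=3M_S^2\exp\bigl(3M_S^2T(T\|F\|_\infty^2+\|G\|_\infty^2)\bigr)$, fill in points the paper leaves implicit.
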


\begin{pf}
Existence and uniqueness follow from
Lemma~\ref{lem:general_mild_identity}. Apply that lemma with $f=g=0$
to the homogeneous state $Y^{s,x}$; this proves
\eqref{eq:positive_lyapunov_representation} directly for mild solutions.

The standard homogeneous mild-state estimate gives
\[
\sup_{r\in[s,T]}\mathbb E\|Y^{s,x}(r)\|_H^2
\le C_L\|x\|_H^2.
\]
Hence \eqref{eq:positive_lyapunov_representation} yields
\[
|\langle D(s)x,x\rangle|
\le C_L\|x\|_H^2\int_s^T\|Q(r)\|\,dr.
\]
Because $D(s)$ is self-adjoint, taking the supremum over $\|x\|_H=1$ proves
\eqref{eq:uniform_lyapunov_estimate}. The mild-state estimate depends on $F$ and
$G$ only through their essential-supremum bounds, which proves the uniform-family
statement.
\end{pf}

\begin{lem} \textit{(Closed-loop perturbation identity)} \label{lem:closed_loop_perturbation}
For a given $\varphi\in L^\infty(t,T;\mathcal L(H;V))$, let $P\in C_{\mathcal S}([t,T];\mathcal S(H))$ be the mild solution to the closed-loop Lyapunov equation:
\begin{equation}\label{eq:closed_loop_lyapunov_mild}
P(s)x=\int_s^T S(r-s)^*\mathcal K_\varphi(P(r))S(r-s)x\,dr, \quad x\in H,
\end{equation}
where
\[
\begin{aligned}
& \mathcal K_\varphi(P) = P(A_1+C_1\varphi) +(A_1+C_1\varphi)^*P \\
& \quad +(A_2+C_2\varphi)^*P(A_2+C_2\varphi) -M^*M+\gamma^2\varphi^*\varphi.
\end{aligned}
\]
Equivalently, its weak differential form is:
\[
(I,\varphi^*(s))Q(P(s)) \begin{pmatrix}I\\ \varphi(s)\end{pmatrix}=0, \qquad P(T)=0.
\]
For any $v(\cdot)\in\mathcal V[t,T]$, define the perturbed input $v_\varphi(s)=v(s)+\varphi(s)X(s)$, with $X$ being the mild solution to:
\[
\begin{cases}
\begin{aligned}
& dX(s)=\big[(A+A_1+C_1\varphi)X(s)+C_1v(s)\big]ds\\
& \qquad\quad +\big[(A_2+C_2\varphi)X(s)+C_2v(s)\big]dW(s),
\end{aligned}\\
X(t)=\xi.
\end{cases}
\]
Then, the cost functional satisfies:
\[
\begin{aligned}
& J_1(t,\xi;0,v_\varphi) = \mathbb E\langle \xi,P(t)\xi\rangle +\mathbb E\int_t^T \big[2\langle v(s),\mathfrak N_\varphi(s)X(s)\rangle\\
& \quad  +\langle v(s),\Lambda^\gamma(P(s))v(s)\rangle\big]ds,
\end{aligned}
\]
where $\mathfrak N_\varphi(s)=\Phi^*(P(s))+\Lambda^\gamma(P(s))\varphi(s)$. Specifically, $J_1(t,\xi;0,\varphi X)=\mathbb E\langle \xi,P(t)\xi\rangle$ when $v\equiv 0$.
\end{lem}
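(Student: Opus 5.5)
The plan is to reduce the statement to Lemma~\ref{lem:general_mild_identity}, applied to the closed-loop coefficients and followed by a direct algebraic expansion. First, the closed-loop state equation has bounded coefficients $F=A_1+C_1\varphi$ and $G=A_2+C_2\varphi$, both in $L^\infty(t,T;\mathcal L(H))$. Its forcing terms are $f=C_1v$ and $g=C_2v$, which lie in $L^2_{\mathbb F}(t,T;H)$. The closed-loop mild Lyapunov equation \eqref{eq:closed_loop_lyapunov_mild} is exactly the terminal-zero Lyapunov equation of Lemma~\ref{lem:general_mild_identity} with these $F,G$ and with the bounded, self-adjoint, strongly measurable forcing
\[
Q_\varphi=-M^*M+\gamma^2\varphi^*\varphi .
\]
Existence and uniqueness of $P\in C_{\mathcal S}([t,T];\mathcal S(H))$ therefore come from that lemma, and so does the identity
\[
\begin{aligned}
\mathbb E\int_t^T\langle Q_\varphi X,X\rangle\,ds
&=\mathbb E\langle P(t)\xi,\xi\rangle\\
&\quad+\mathbb E\int_t^T\bigl(2\langle PX,C_1v\rangle+2\langle PGX,C_2v\rangle+\langle PC_2v,C_2v\rangle\bigr)ds .
\end{aligned}
\]
I must watch the sign convention here. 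In Lemma~\ref{lem:general_mild_identity} the equation reads $\dot D+\dots+Q=0$, while $\mathcal K_\varphi$ is written as the right-hand side of $P=\int S^*\mathcal K_\varphi S$. So I will match $-Q$ with $\mathcal K_\varphi$'s zeroth-order part and fix the sign once by comparing with the case $\varphi=0$ of Lemma~\ref{lem:j1_expansion}.

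Next I expand the cost itself. With $u=0$ and input $v_\varphi=v+\varphi X$, the state equation \eqref{eq:disturbance_system} coincides with the closed-loop equation above. By uniqueness of mild solutions, the two states agree, exactly as in the proof of Proposition~\ref{prop:feedback_input_isomorphism}. Then
\[
J_1=\mathbb E\int_t^T\bigl[\langle Q_\varphi X,X\rangle+2\gamma^2\langle v,\varphi X\rangle+\gamma^2\|v\|^2\bigr]ds .
\]
I substitute the identity for the first term. I then collect the cross terms $2\langle v,(C_1^*P+C_2^*PA_2+C_2^*PC_2\varphi+\gamma^2\varphi)X\rangle$, which equal $2\langle v,\mathfrak N_\varphi X\rangle$ because $\Phi^*(P)=C_1^*P+C_2^*PA_2$ and $\Lambda^\gamma(P)\varphi=\gamma^2\varphi+C_2^*PC_2\varphi$. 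The quadratic terms in $v$ combine to $\langle v,\Lambda^\gamma(P)v\rangle$. The case $v\equiv0$ is then immediate.

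The weak-form equivalence $(I,\varphi^*)Q(P)(I,\varphi^*)^*=0$ is only formal on $D(A)$. I would verify it by expanding the block product, which yields $\dot P+P(A+A_1+C_1\varphi)+(\cdot)^*P+(A_2+C_2\varphi)^*P(A_2+C_2\varphi)-M^*M+\gamma^2\varphi^*\varphi$. This is the formal differential version of \eqref{eq:closed_loop_lyapunov_mild}, and I will state it as notation only.

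The main obstacle is purely bookkeeping: getting the sign of the Lyapunov forcing and the cross term $\langle PGX,g\rangle$ consistent, since $G$ now contains $C_2\varphi$. That term is what produces the $C_2^*PC_2\varphi$ contribution inside $\mathfrak N_\varphi$. No new analytic approximation is needed, because all limiting arguments are already contained in Lemma~\ref{lem:general_mild_identity}.
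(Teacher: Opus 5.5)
Your proposal is correct and is essentially the paper's proof: apply Lemma~\ref{lem:general_mild_identity} with $F=A_1+C_1\varphi$, $G=A_2+C_2\varphi$, $Q=-M^*M+\gamma^2\varphi^*\varphi$, $f=C_1v$, $g=C_2v$, then expand $\gamma^2\|v+\varphi X\|^2-\|MX\|^2$ and read off $\mathfrak N_\varphi=\Phi^*(P)+\Lambda^\gamma(P)\varphi$ and $\Lambda^\gamma(P)$ as the linear and quadratic coefficients in $v$. One small slip: you do not need to match $-Q$ with the zeroth-order part of $\mathcal K_\varphi$, since the mild form of $\dot D+D(A+F)+(A+F)^*D+G^*DG+Q=0$, $D(T)=0$, is $D(s)x=\int_s^T S(r-s)^*(DF+F^*D+G^*DG+Q)S(r-s)x\,dr$, so $Q$ itself is that zeroth-order part --- which is exactly the $Q_\varphi$ you actually used.
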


\begin{pf}
Apply Lemma~\ref{lem:general_mild_identity} with
$F=A_1+C_1\varphi$, $G=A_2+C_2\varphi$,
$Q=-M^*M+\gamma^2\varphi^*\varphi$, $f=C_1v$ and $g=C_2v$.
Expand $\gamma^2\|v+\varphi X\|^2-\|MX\|^2$ and collect the
cross terms. The coefficient of the term linear in $v$ is precisely
$\Phi(P)^*+\Lambda^\gamma(P)\varphi$, and the quadratic coefficient
is $\Lambda^\gamma(P)$. This proves the claimed identity without requiring
operator-norm time differentiability of $P$.
\end{pf}

\begin{lem} \textit{(Lower bound under strict gain)} \label{lem:j1_lower_bound}
Assume that Assumption~\ref{assum:wellposed} holds and
$\|\mathcal L\|<\gamma$. Then there exists $\mu>0$,
independent of the starting time $\tau\in[t,T]$, such that, for every
$\xi\in L^2(\Omega,\mathcal F_\tau,\mathbb P;H)$ and
$v\in\mathcal V[\tau,T]$,
\[
J_1(\tau,\xi;0,v)\ge -\mu\,\mathbb E\|\xi\|_H^2 .
\]
\end{lem}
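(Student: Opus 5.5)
The approach is to split the state into a zero-initial part driven by $v$ and a homogeneous part driven by $\xi$, then combine strict attenuation with a Cauchy--Schwarz absorption. Fix $\tau\in[t,T]$, $\xi$, and $v\in\mathcal V[\tau,T]$. By linearity of \eqref{eq:disturbance_system}, write $X=X^0+Y$. Here $X^0=X(\cdot;\tau,0,0,v)$ is the zero-initial response to $v$, and $Y=X(\cdot;\tau,\xi,0,0)$ is the homogeneous response. Then
\[
J_1(\tau,\xi;0,v)=\gamma^2\|v\|^2-\|MX^0\|^2-2\,\mathbb E\!\int_\tau^T\langle MX^0,MY\rangle ds-\|MY\|^2 ,
\]
with all norms taken in $L^2_{\mathbb F}(\tau,T;\cdot)$.

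The first step is to transfer the strict gain from the interval $[t,T]$ to each $[\tau,T]$ with the same constant. Given $v\in\mathcal V[\tau,T]$, extend it by zero on $[t,\tau)$. The corresponding zero-initial state on $[t,T]$ vanishes on $[t,\tau]$. Since the coefficients are deterministic and the same on $[\tau,T]$, by uniqueness in Theorem~\ref{thm:existence_uniqueness} it coincides with $X^0$ on $[\tau,T]$. Hence $\|MX^0\|_{L^2(\tau,T)}\le\|\mathcal L\|\,\|v\|$. Set $\varepsilon:=\gamma^2-\|\mathcal L\|^2>0$. This is the only point where the predictable filtration structure matters: the zero-extended process is still predictable. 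I expect this uniformity-in-$\tau$ step to be the main thing to check carefully; it is elementary, but it is exactly what makes $\mu$ independent of $\tau$.

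The second step bounds the homogeneous part. The mild estimate \eqref{eq:mild_estimate}, applied on $[\tau,T]\subset[0,T]$, gives $\|MY\|^2\le \|M\|_\infty^2 T C\,\mathbb E\|\xi\|^2=:c_0\mathbb E\|\xi\|^2$. The constant $C$ can be chosen uniform in the starting time: it depends only on $T$, the semigroup bound, and the coefficient norms. If the paper's constant is not explicitly uniform, I would rederive it with Gronwall on $[\tau,T]$ using $\sup_{[0,T]}\|S(r)\|$.

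Finally, absorb the cross term. Using Young's inequality,
\[
2|\langle MX^0,MY\rangle|\le\eta\|MX^0\|^2+\eta^{-1}\|MY\|^2 .
\]
This gives
\[
J_1\ge \gamma^2\|v\|^2-(1+\eta)\|\mathcal L\|^2\|v\|^2-(1+\eta^{-1})c_0\mathbb E\|\xi\|^2 .
\]
Choose $\eta>0$ with $(1+\eta)\|\mathcal L\|^2\le\gamma^2$; for example $\eta=\varepsilon/\|\mathcal L\|^2$, or any $\eta$ if $\mathcal L=0$. Then the $v$-terms are nonnegative, and the bound holds with $\mu=(1+\eta^{-1})c_0$. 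This $\mu$ depends only on $\gamma$, $\|\mathcal L\|$, $T$, and the coefficient bounds, not on $\tau$, $\xi$, or $v$.
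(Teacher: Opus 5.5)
Your argument is correct, and it takes a different route from the paper's.

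The paper introduces the Lyapunov solution $P_0$ of $\dot P_0+P_0(A+A_1)+(A+A_1)^*P_0+A_2^*P_0A_2-M^*M=0$, $P_0(T)=0$. It then uses the mild quadratic identity of Lemma~\ref{lem:general_mild_identity} to rewrite the $\xi$-dependent part of the cost as
\[
J_1(\tau,\xi;0,v)-J_1(\tau,0;0,v)=\mathbb E\langle\xi,P_0(\tau)\xi\rangle+2\,\mathbb E\int_\tau^T\langle v,\Phi^*(P_0)X^{\tau,\xi;0}\rangle\,ds .
\]
This moves the cross term to the input side. The paper absorbs it against the margin $\varepsilon^2\|v\|^2$ (with $\varepsilon^2\le\gamma^2-\|\mathcal L\|^2$) coming from $J_1(\tau,0;0,v)$, which yields $\mu=\|P_0\|_\infty+\varepsilon^{-2}C_1'$.

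You instead keep the cross term $-2\,\mathbb E\int\langle MX^0,MY\rangle\,ds$ in output space. You absorb it with Young's inequality against the gain bound $\|MX^0\|\le\|\mathcal L\|\,\|v\|$.

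What your version buys:
\begin{itemize}
\item It needs only superposition, the mild state estimate, and the definition of the induced norm.
\item It bypasses the unbounded-generator quadratic identity, whose justification requires the Yosida and mollification argument.
\item It gives the explicit constant $\mu=\gamma^2c_0/(\gamma^2-\|\mathcal L\|^2)$; if $c_0=0$, simply take any $\mu>0$.
\item It makes explicit the uniformity in $\tau$ of the homogeneous-state constant, which the paper leaves implicit.
\end{itemize}

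What the paper's route buys: it reuses $P_0$, which it needs anyway as the starting point of the Lyapunov--feedback iteration in Theorem~\ref{thm:sbrl}. It also casts the estimate in the same ``value plus input-linear term'' form as Lemma~\ref{lem:closed_loop_perturbation}.
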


\begin{pf}
Fix $\tau\in[t,T]$. Extending an input on $[\tau,T]$ by zero on $[t,\tau)$
shows that the same strict-gain gap is valid for the subsystem starting at $\tau$.
We write $t=\tau$ in the calculation below.
Choose $\varepsilon>0$ with $\varepsilon^2\le\gamma^2-\|\mathcal L\|^2$. For zero
initial state, the strict gain condition gives

{\small
\[
\begin{aligned}
J_1(t,0;0,v)
&=\mathbb E\int_t^T\bigl(\gamma^2\|v(s)\|_V^2-\|M(s)X^{t,0;v}(s)\|_{Z_1}^2\bigr)\,ds\\
&\ge \varepsilon^2\|v\|_{\mathcal V[t,T]}^2 .
\end{aligned}
\]
}

Let $P_0$ be the unique mild solution supplied by
Lemma~\ref{lem:mild_lyapunov_uniform_estimate} for the Lyapunov identity obtained from
Lemma \ref{lem:closed_loop_perturbation} with $\varphi=0$:
\[
\begin{aligned}
&0={}\dot P_0+P_0(A+A_1)+(A+A_1)^*P_0+A_2^*P_0A_2-M^*M,\\
&P_0(T)=0 .
\end{aligned}
\]
The corresponding quadratic identity gives
\[
\begin{aligned}
&J_1(t,\xi;0,v)-J_1(t,0;0,v)\\
&\quad=2\mathbb E\int_t^T
\langle v(s),\Phi^*(P_0(s))X^{t,\xi;0}(s)\rangle\,ds+\mathbb E\langle\xi,P_0(t)\xi\rangle .
\end{aligned}
\]
Here we used linearity:
$X^{t,\xi;v}=X^{t,0;v}+X^{t,\xi;0}$.
Consequently,
\[
\begin{aligned}
J_1(t,\xi;0,v)
&\ge \mathbb E\langle\xi,P_0(t)\xi\rangle
\!+\!\mathbb E\int_t^T
\varepsilon^2\|v(s)\|_V^2\,ds\\
&\quad+2\mathbb E\int_t^T
\big\langle v(s),\Phi^*(P_0(s))X^{t,\xi;0}(s)\big\rangle\,ds\\
&\ge \mathbb E\langle\xi,P_0(t)\xi\rangle
\! -\!\varepsilon^{-2}\mathbb E\int_t^T
\big\|\Phi^*(P_0(s))\\
&\hspace{2.0cm}\cdot X^{t,\xi;0}(s)\big\|_V^2\,ds .
\end{aligned}
\]
The homogeneous mild equation satisfies the standard estimate
\[
\begin{aligned}
\mathbb E\int_t^T\|X^{t,\xi;0}(s)\|_H^2\,ds
&\le C_0\mathbb E\|\xi\|_H^2 .
\end{aligned}
\]
Since $P_0$, $C_1$, $A_2$, and $C_2$ are bounded on $[t,T]$, there is $C_1'>0$ such
that
\[
\begin{aligned}
\mathbb E\int_t^T
\|\Phi^*(P_0(s))X^{t,\xi;0}(s)\|_V^2ds
&\le C_1'\mathbb E\|\xi\|_H^2 .
\end{aligned}
\]
Also $P_0$ is bounded, so
$\mathbb E\langle\xi,P_0(t)\xi\rangle\ge-\|P_0\|_\infty\mathbb E\|\xi\|_H^2$.
Taking $\mu=\|P_0\|_\infty+\varepsilon^{-2}C_1'$ proves the claim.
\end{pf}

\begin{lem} \textit{Coercivity from the strict gain estimate.} \label{lem:lambda_coercivity}
Suppose $P$ and $\varphi$ satisfy the conditions of Lemma \ref{lem:closed_loop_perturbation}. Under the strict bounded-real condition $\|\mathcal L\|<\gamma$, for any $\varepsilon>0$ satisfying $\varepsilon^2\le \gamma^2-\|\mathcal L\|^2$, the coercivity condition
\[
\Lambda^\gamma(P(s))\ge \varepsilon^2 I_V, \qquad \text{a.e. } s\in[t,T],
\]
holds with a common exceptional null set independent of the input vector.
\end{lem}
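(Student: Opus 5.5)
Fix $\varphi$ and the corresponding closed-loop Lyapunov solution $P$ from Lemma~\ref{lem:closed_loop_perturbation}. The plan is to combine the perturbation identity of that lemma with the strict-gain estimate $J_1(\tau,0;0,v)\ge\varepsilon^2\|v\|^2_{\mathcal V[\tau,T]}$ from the proof of Lemma~\ref{lem:j1_lower_bound}. Both hold uniformly in the starting time $\tau\in[t,T]$. We then localize in time with spike (needle) disturbances.

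First, take zero initial state at a starting time $\tau$. For any $v\in\mathcal V[\tau,T]$, Proposition~\ref{prop:feedback_input_isomorphism} identifies $v_\varphi=v+\varphi X$ with an admissible disturbance for \eqref{eq:disturbance_system}. Hence Lemma~\ref{lem:closed_loop_perturbation} (with $\xi=0$) gives
\[
\mathbb E\int_\tau^T\bigl[2\langle v,\mathfrak N_\varphi X\rangle+\langle v,\Lambda^\gamma(P)v\rangle\bigr]ds
=J_1(\tau,0;0,v_\varphi)\ge\varepsilon^2\,\mathbb E\int_\tau^T\|v+\varphi X\|_V^2\,ds .
\]
Now choose the deterministic spike $v=x\,\mathbf 1_{[\tau,\tau+h]}$ with $x\in V$ fixed, and $X$ the closed-loop state with $X(\tau)=0$. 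The mild estimate of Theorem~\ref{thm:existence_uniqueness}, applied to the closed-loop coefficients, gives $\sup_{s\le\tau+h}\mathbb E\|X(s)\|^2\le Ch\|x\|^2$. After the spike, $v=0$ and only the left side's cross term vanishes, while the right side is nonnegative. So the inequality reduces to
\[
\int_\tau^{\tau+h}\langle\Lambda^\gamma(P(s))x,x\rangle ds+O(h^{3/2})\|x\|^2
\ge\varepsilon^2 h\|x\|^2-O(h^{3/2})\|x\|^2 .
\]
The cross terms are bounded by Cauchy--Schwarz as $\|x\|\,h^{1/2}(\int_\tau^{\tau+h}\mathbb E\|X\|^2)^{1/2}=O(h^{3/2})$, and $\varphi X$ contributes similarly. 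Dividing by $h$ and letting $h\downarrow0$ gives $\langle\Lambda^\gamma(P(\tau))x,x\rangle\ge\varepsilon^2\|x\|^2$ at every Lebesgue point $\tau$ of $s\mapsto\langle\Lambda^\gamma(P(s))x,x\rangle$. This map is bounded and measurable because $P$ is strongly continuous and $C_2$ is strongly measurable.

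The main obstacle is the claim of a common exceptional null set independent of $x$. I would handle it as follows. Fix a countable dense subset $\{x_k\}\subset V$, using separability of $V$. Take the union of the null sets of non-Lebesgue points for the countably many scalar functions $s\mapsto\langle\Lambda^\gamma(P(s))x_k,x_k\rangle$; this union is still a null set $\mathcal N$. For $s\notin\mathcal N$, the inequality holds for all $x_k$. Because $\Lambda^\gamma(P(s))$ is a bounded operator (with $\sup_s\|C_2(s)\|\,\|P(s)\|<\infty$ outside a null set, added to $\mathcal N$), the quadratic form is continuous in $x$. Density then extends $\langle\Lambda^\gamma(P(s))x,x\rangle\ge\varepsilon^2\|x\|^2$ to all $x\in V$, for every $s\notin\mathcal N$.

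A secondary point to check is that the $O(h^{3/2})$ constants are uniform in $\tau$. They depend only on $\|\varphi\|_\infty$, the coefficient bounds, $\sup\|P\|$ and the semigroup bound on $[0,T]$, so the limit is legitimate at each Lebesgue point.
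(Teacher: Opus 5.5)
Your proposal is correct and follows essentially the same route as the paper. You insert a deterministic spike $x\mathbf 1_{[\tau,\tau+h]}$ into the closed-loop perturbation identity of Lemma~\ref{lem:closed_loop_perturbation}, compare it with the strict-gain bound for $v_\varphi$, drop the nonnegative tail after $\tau+h$, divide by $h$ at Lebesgue points, and pass from a countable dense subset of $V$ to a common null set. The only minor difference is how the cross term is shown to be $o(h)$: you use Cauchy--Schwarz with $\sup\mathbb E\|X\|^2\le Ch\|x\|^2$ to get $O(h^{3/2})$, while the paper uses the first-moment bound $\|\mathbb E X(s)\|\le C(s-t_0)\|v_0\|$; either estimate suffices.
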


\begin{pf}
Fix $t_0\in[t,T)$ and $v_0\in V$. For a small $h>0$, consider the admissible deterministic spike input $v(s)=v_0{\bf 1}_{[t_0,t_0+h]}(s)$. Defining the perturbed input $v_\varphi=v+\varphi X$ as in Lemma~\ref{lem:closed_loop_perturbation}, where $X(t_0)=0$, the strict gain bound implies:\vspace{-1.5ex}
\[
J_1(t_0,0;0,v_\varphi) \ge \varepsilon^2\mathbb E\int_{t_0}^T\|v_\varphi(s)\|_V^2ds .
\]
Conversely, applying Lemma \ref{lem:closed_loop_perturbation} and noting that $v$ vanishes outside $[t_0,t_0+h]$, we truncate the non-negative tail energy $\int_{t_0+h}^T\|\varphi(s)X(s)\|^2 ds$ in the lower bound to obtain:\vspace{1ex}
\[
\begin{aligned}
&\varepsilon^2\mathbb E\int_{t_0}^{t_0+h} \|v_0+\varphi(s)X(s)\|_V^2ds \\
&\le \mathbb E\int_{t_0}^{t_0+h} \big[2\langle v_0,\mathfrak N_\varphi(s)X(s)\rangle +\langle v_0,\Lambda^\gamma(P(s))v_0\rangle\big]ds .
\end{aligned}
\]
For this spike input, standard mild formulation estimates and It\^o's isometry yield $\sup_{s\in[t_0,t_0+h]}\mathbb E\|X(s)\|_H^2 \le Ch\|v_0\|_V^2$ and $\|\mathbb E X(s)\|_H\le C(s-t_0)\|v_0\|_V$. Therefore,
\[
\begin{aligned}
&\frac1h\mathbb E\int_{t_0}^{t_0+h} \langle v_0,\mathfrak N_\varphi(s)X(s)\rangle ds \to 0,\\
&\frac1h\mathbb E\int_{t_0}^{t_0+h} \|\varphi(s)X(s)\|_V^2 ds \to 0.
\end{aligned}
\]
Dividing the truncated inequality by $h$ and taking the limit $h\downarrow 0$, the Lebesgue differentiation theorem guarantees:
\[
\varepsilon^2\|v_0\|_V^2 \le \langle v_0,\Lambda^\gamma(P(t_0))v_0\rangle.
\]
Perform this argument first on a countable dense subset of $V$, taking
the common set of Lebesgue points of the scalar quadratic forms. Its
complement is null. Boundedness of $\Lambda^\gamma(P(t_0))$ then extends
the inequality to every $v_0\in V$, proving the operator lower bound.
\end{pf}

\begin{thm} \textit{(Stochastic bounded real lemma)} \label{thm:sbrl}
Suppose Assumption~\ref{assum:wellposed} holds, fix $t\in[0,T)$, and let
$\gamma>0$. The following statements are equivalent:
\begin{enumerate}
\item the disturbance operator of \eqref{eq:disturbance_system} satisfies
$\|\mathcal L\|<\gamma$ on $[t,T]$;
\item the Riccati equation \eqref{eq:hinf_riccati} admits a unique strongly
regular mild solution $P$ in the sense of
Definition~\ref{def:strongly_regular_mild}.
\end{enumerate}
In this case, $\Psi_P=-(\Lambda^\gamma(P))^{-1}\Phi(P)^*$ belongs to
$L^\infty(t,T;\mathcal L(H;V))$. For every initial state $\xi$, the indefinite
LQ problem has the unique optimal disturbance
\[
v^*(s)=\Psi_P(s)X^*(s),
\]
and its value is $\mathbb E\langle\xi,P(t)\xi\rangle$.
\end{thm}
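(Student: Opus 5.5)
The plan has three parts. The direction (2)$\Rightarrow$(1) and the optimality statement follow from the quadratic identity of Lemma~\ref{lem:j1_expansion}. The direction (1)$\Rightarrow$(2) is a Kleinman-type (policy) iteration built on Lemmas~\ref{lem:mild_lyapunov_uniform_estimate}, \ref{lem:closed_loop_perturbation}, \ref{lem:j1_lower_bound} and \ref{lem:lambda_coercivity}. Uniqueness comes from the value representation.

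\emph{Step 1 (2)$\Rightarrow$(1) and optimality.} Let $P$ be strongly regular with constant $\delta$. Then $\Lambda^\gamma(P)^{-1}$ is bounded by $\delta^{-1}$ and $\Phi(P)$ is bounded, so $\Psi_P\in L^\infty(t,T;\mathcal L(H;V))$. By Lemma~\ref{lem:j1_expansion} with $P(T)=0$,
\[
J_1(t,\xi;0,v)=\mathbb E\langle\xi,P(t)\xi\rangle+\mathbb E\int_t^T\langle\Lambda^\gamma(P)(v-\Psi_PX),v-\Psi_PX\rangle\,ds .
\]
This gives $J_1\ge\mathbb E\langle\xi,P(t)\xi\rangle$, with equality iff $v=\Psi_PX$ a.e. The closed-loop feedback equation of Proposition~\ref{prop:feedback_input_isomorphism} has a unique solution $X^*$, which yields existence and uniqueness of $v^*$ and the value $\mathbb E\langle\xi,P(t)\xi\rangle$. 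Since the value is $\mathbb E\langle\xi,P(t)\xi\rangle$ for every $\xi$, $P(t)$ is determined on every interval $[\tau,T]$, and this gives uniqueness of $P$. For $\xi=0$, Proposition~\ref{prop:feedback_input_isomorphism} gives $J_1(t,0;0,v)\ge\delta\|\Gamma_{\Psi_P}v\|^2\ge\delta m_{\Psi_P}^2\|v\|^2$. Hence $\|\mathcal Lv\|^2\le(\gamma^2-\delta m^2_{\Psi_P})\|v\|^2$, which is strict attenuation.

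\emph{Step 2 (1)$\Rightarrow$(2), construction.} Set $\varphi_0=0$. Given $\varphi_k\in L^\infty$, let $P_k$ solve the closed-loop Lyapunov equation \eqref{eq:closed_loop_lyapunov_mild}; Lemma~\ref{lem:mild_lyapunov_uniform_estimate} gives existence. By Lemma~\ref{lem:lambda_coercivity}, $\Lambda^\gamma(P_k)\ge\varepsilon^2I_V$ with $\varepsilon$ independent of $k$. We may therefore set $\varphi_{k+1}=\Psi_{P_k}=-\Lambda^\gamma(P_k)^{-1}\Phi(P_k)^*$.

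Two uniform bounds control the iterates. First, Lemma~\ref{lem:closed_loop_perturbation} with $v=0$ gives $\langle P_k(\tau)x,x\rangle=J_1(\tau,x;0,\varphi_kX)\ge-\mu\|x\|^2$ by Lemma~\ref{lem:j1_lower_bound}. Second, $P_k\le P_0$ because the iteration is monotone. For monotonicity, subtract the Lyapunov equations for $P_k$ and $P_{k+1}$ and complete the square with $\mathfrak N$. The difference $P_k-P_{k+1}$ then solves a Lyapunov equation with $F=A_1+C_1\varphi_{k+1}$, $G=A_2+C_2\varphi_{k+1}$ and forcing $(\varphi_{k+1}-\varphi_k)^*\Lambda^\gamma(P_k)(\varphi_{k+1}-\varphi_k)\ge0$. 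The representation \eqref{eq:positive_lyapunov_representation} then gives $P_k\ge P_{k+1}$. Hence $\sup_k\|P_k\|_\infty<\infty$, and so $\sup_k\|\varphi_k\|_\infty\le\varepsilon^{-2}C$.

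\emph{Step 3 (passage to the limit) — main obstacle.} The sequence is monotone and bounded, so $P_k(s)\to P(s)$ strongly for each $s$. Strong convergence is not enough to pass the nonlinear term $\Phi\Lambda^{-1}\Phi^*$ through the mild integral, nor to keep $P$ in $C_{\mathcal S}$. I would try first to upgrade to uniform operator-norm convergence. Apply the estimate \eqref{eq:uniform_lyapunov_estimate} to $D_k=P_k-P_{k+1}$ with the uniform constant $C_L$, valid because the gains $(F_j,G_j)$ are uniformly bounded. The forcing has norm $\le C\|\varphi_{k+1}-\varphi_k\|^2\le C'\|P_k-P_{k-1}\|^2$, using the Lipschitz dependence of $\Psi_P$ on $P$ on the set where $\Lambda^\gamma\ge\varepsilon^2$. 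A Volterra-type iteration of these inequalities gives
\[
\sup_s\|P_{k+1}(s)-P_k(s)\|\le \frac{(C(T-t))^k}{k!}\,\mathrm{const}
\]
(or a geometric bound on short intervals, followed by backward continuation). Thus $P_k$ is Cauchy in $C_{\mathcal S}([t,T];\mathcal S(H))$ and $\varphi_k\to\Psi_P$ in $L^\infty$. The Lyapunov mild equations for $P_k$ then pass to the limit, with $\mathcal K_{\varphi_k}(P_k)\to\mathfrak R(P)$ in norm, which is exactly the identity \eqref{eq:hinf_riccati_mild}. Finally, $\Lambda^\gamma(P)\ge\varepsilon^2I_V$ is inherited from the iterates, so $P$ is strongly regular. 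Uniqueness and the optimal-disturbance statement then follow from Step 1.
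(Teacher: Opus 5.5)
Your proposal is correct and follows the paper's proof essentially step for step. For (2)$\Rightarrow$(1) you use the same completed-square identity combined with the feedback-isomorphism lower bound. For (1)$\Rightarrow$(2) you use the same monotone Lyapunov--feedback iteration with the factorial operator-norm estimate; your quadratic forcing bound reduces to the paper's linear Volterra bound once you use the uniform bound on the iterates. The only genuine difference is uniqueness: you read $P(\tau)$ off the value function on each $[\tau,T]$, whereas the paper subtracts two mild Riccati solutions and applies a Lipschitz--Gronwall argument; both are valid.
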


\begin{pf}
We first prove (ii)$\Rightarrow$(i). Let $P$ be a strongly regular mild solution
and put $\varphi=\Psi_P$. Expanding the identity
$\Phi^*(P)+\Lambda^\gamma(P)\Psi_P=0$ shows that the mild Riccati equation is
equivalent to the closed-loop Lyapunov equation
\eqref{eq:closed_loop_lyapunov_mild} with this $\varphi$. Hence
Lemma~\ref{lem:closed_loop_perturbation} gives, for zero initial state,
\[
\begin{aligned}
J_1(t,0;0,v)&=\mathbb E\int_t^T
\bigl\langle\Lambda^\gamma(P(s))
\big(v(s)-\Psi_P(s)X(s)\big),\\
&\hspace{1.8cm}v(s)-\Psi_P(s)X(s)\bigr\rangle\,ds .
\end{aligned}
\]
Because $P$ and the coefficients are bounded and
$\Lambda^\gamma(P)\ge\delta I_V$, the feedback $\Psi_P$ is essentially bounded.
Proposition~\ref{prop:feedback_input_isomorphism}, applied with
$\Psi=\Psi_P$, shows that the input transformation
$\Gamma v=v-\Psi_PX^{t,0;v}$ is a bounded isomorphism. With the explicit choice
$\beta=\|\Gamma^{-1}\|^{-2}>0$, we have
\[
\|v-\Psi_PX\|_{\mathcal V[t,T]}^2
=\|\Gamma v\|_{\mathcal V[t,T]}^2
\ge\beta\|v\|_{\mathcal V[t,T]}^2.
\]
Substitution into the $J_1$ identity gives
\[
\begin{aligned}
J_1(t,0;0,v)
&=\mathbb E\int_t^T
\bigl[\gamma^2\|v(s)\|^2-\|Z(s)\|^2\bigr]\,ds\\
&=\mathbb E\int_t^T\Big\langle
\Lambda^\gamma(P(s))\big(v(s)-\Psi_P(s)X(s)\big),\\
&\hspace{1.8cm}v(s)-\Psi_P(s)X(s)\Big\rangle\,ds\\
&\ge\delta\mathbb E\int_t^T
\|v(s)-\Psi_P(s)X(s)\|^2\,ds\\
&\ge\delta\beta\mathbb E\int_t^T\|v(s)\|^2\,ds.
\end{aligned}
\]
Rearranging this inequality yields
\[
\begin{aligned}
\mathbb E\int_t^T\|Z(s)\|^2\,ds
&\le(\gamma^2-\delta\beta)
\mathbb E\int_t^T\|v(s)\|^2\,ds.
\end{aligned}
\]
Since $\delta>0$ and $\beta=\|\Gamma^{-1}\|^{-2}>0$, the induced norm satisfies
$\|\mathcal{L}\| \leq (\gamma^2-\delta\beta)^{1/2}<\gamma$.

We now prove (i)$\Rightarrow$(ii). The construction follows the Lyapunov--feedback
iteration used in the finite-dimensional bounded-real proof of
\cite[Lemma~3.6]{wang2023stochastic}, with the operator-norm convergence argument
adapted from the infinite-dimensional Riccati iteration in
\cite[Theorem~8.2]{wang2025lqjumps}.

\smallskip
\noindent\textbf{Step 1: uniform convexity and the initial Lyapunov solution.}
Choose $\varepsilon>0$ such that
\[
0<\varepsilon^2\le\gamma^2-\|\mathcal L\|^2.
\]
Then, for every $v\in\mathcal V[t,T]$,
\begin{equation}\label{eq:gain_uniform_convexity}
\begin{aligned}
J_1(t,0;0,v)
&=\gamma^2\|v\|_{\mathcal V[t,T]}^2
-\|\mathcal Lv\|_{L^2_{\mathbb F}(t,T;Z_1)}^2\\
&\ge\varepsilon^2\|v\|_{\mathcal V[t,T]}^2.
\end{aligned}
\end{equation}
Let $P_0\in C_{\mathcal S}([t,T];\mathcal S(H))$ be the unique mild solution of
\begin{equation}\label{eq:iteration_p0}
\left\{
\begin{aligned}
&0={}\dot P_0+P_0(A+A_1)+(A+A_1)^*P_0\\
&\quad+A_2^*P_0A_2-M^*M,\\
&P_0(T)=0.
\end{aligned}
\right.
\end{equation}
Existence, uniqueness, and the operator estimate follow from
Lemma~\ref{lem:mild_lyapunov_uniform_estimate}; this is also the no-jump
specialization of the mild Lyapunov construction in
\cite[Definition~8.1 and Theorem~8.2, Step~1]{wang2025lqjumps}.
Lemma~\ref{lem:j1_lower_bound} yields a constant
$\mu>0$ such that
\begin{equation}\label{eq:uniform_cost_lower_bound}
J_1(\tau,\xi;0,v)\ge-\mu\mathbb E\|\xi\|_H^2
\end{equation}
for every $\tau\in[t,T]$ and every admissible pair $(\xi,v)$ on $[\tau,T]$.
Moreover, Lemma~\ref{lem:lambda_coercivity}, applied with $\varphi=0$, gives
\begin{equation}\label{eq:iteration_coercivity_p0}
\Lambda^\gamma(P_0(s))\ge\varepsilon^2I_V
\quad\text{for a.e. }s\in[t,T].
\end{equation}

\smallskip
\noindent\textbf{Step 2: Lyapunov--feedback iteration and monotonicity.}
Suppose $P_j$ has been constructed and satisfies
$\Lambda^\gamma(P_j)\ge\varepsilon^2I_V$. Define
\begin{equation}\label{eq:iteration_feedback}
\begin{aligned}
\varphi_j&=-\bigl(\Lambda^\gamma(P_j)\bigr)^{-1}\Phi(P_j)^*,\\
\widehat A_j&=A_1+C_1\varphi_j,\\
\widehat C_j&=A_2+C_2\varphi_j.
\end{aligned}
\end{equation}
and let $P_{j+1}$ be the unique mild solution of
\begin{equation}\label{eq:iteration_lyapunov}
\left\{
\begin{aligned}
&0={}\dot P_{j+1}+P_{j+1}(A+\widehat A_j)+(A+\widehat A_j)^*P_{j+1}\\
&+\widehat C_j^*P_{j+1}\widehat C_j+\gamma^2\varphi_j^*\varphi_j-M^*M,\\
&P_{j+1}(T)=0,
\end{aligned}
\right.
\end{equation}
where the equation is understood in the mild sense and has a unique solution by
Lemma~\ref{lem:mild_lyapunov_uniform_estimate}. Taking $v=0$ in
Lemma~\ref{lem:closed_loop_perturbation} and then using
\eqref{eq:uniform_cost_lower_bound}, we obtain, for deterministic $x\in H$,
\[
\langle P_{j+1}(s)x,x\rangle
=J_1(s,x;0,\varphi_jX)\ge-\mu\|x\|_H^2.
\]
Hence $P_{j+1}(s)\ge-\mu I_H$. Lemma~\ref{lem:lambda_coercivity}, now applied to
the pair $(P_{j+1},\varphi_j)$, also gives
\begin{equation}\label{eq:iteration_uniform_coercivity}
\Lambda^\gamma(P_{j+1}(s))\ge\varepsilon^2I_V.
\end{equation}
Thus the iteration is well defined for all $j\ge0$.

Put $\varphi_{-1}=0$ and, for $j\ge0$, set
\[
\Delta_j=P_j-P_{j+1},
\qquad
\Theta_j=\varphi_{j-1}-\varphi_j.
\]
Subtracting the two consecutive Lyapunov equations and using the stationarity
identity
\[
\Phi(P_j)^*+\Lambda^\gamma(P_j)\varphi_j=0
\]
shows that $\Delta_j$ is the mild solution of
\begin{equation}\label{eq:iteration_difference}
\left\{
\begin{aligned}
&0={}\dot\Delta_j+\Delta_j(A+\widehat A_j)+(A+\widehat A_j)^*\Delta_j\\
&+\widehat C_j^*\Delta_j\widehat C_j+\Theta_j^*\Lambda^\gamma(P_j)\Theta_j,\\
&\Delta_j(T)=0.
\end{aligned}
\right.
\end{equation}
The forcing operator in \eqref{eq:iteration_difference} is nonnegative.
The representation \eqref{eq:positive_lyapunov_representation} therefore yields
$\Delta_j(s)\ge0$. Consequently, for $s\in[t,T]$,
\begin{equation}\label{eq:iteration_monotone}
\begin{aligned}
P_0(s)\ge P_1(s)\ge\cdots\ge P_j(s)\ge P_{j+1}(s)\ge-\mu I_H,
\end{aligned}
\end{equation}
Since every $P_j(s)$ is self-adjoint, the order bounds give the explicit estimate
\begin{equation}\label{eq:iteration_uniform_operator_bound}
\begin{aligned}
&\sup_{j\ge0}\sup_{s\in[t,T]}\|P_j(s)\|_{\mathcal L(H)}\\
&\quad\le \max\bigl\{\mu,
\|P_0\|_{L^\infty(t,T;\mathcal L(H))}\bigr\}<\infty.
\end{aligned}
\end{equation}

\smallskip
\noindent\textbf{Step 3: operator-norm convergence and passage to the Riccati
equation.}
Write
\[
K_j=\Lambda^\gamma(P_j),
\qquad L_j=\Phi(P_j)^*.
\]
For $j\ge1$, the resolvent identity for $K_j^{-1}$ gives
\begin{equation}\label{eq:iteration_feedback_difference}
\begin{aligned}
&\Theta_j=K_j^{-1}(K_{j-1}-K_j)K_{j-1}^{-1}L_j-K_{j-1}^{-1}(L_{j-1}-L_j),\\
&K_{j-1}-K_j=C_2^*\Delta_{j-1}C_2,\\
&L_{j-1}-L_j=C_1^*\Delta_{j-1}+C_2^*\Delta_{j-1}A_2.
\end{aligned}
\end{equation}
By \eqref{eq:iteration_uniform_coercivity},
$\|K_j^{-1}\|\le\varepsilon^{-2}$. Together with
\eqref{eq:iteration_monotone} and the boundedness of the coefficients,
\eqref{eq:iteration_feedback_difference} implies, with a constant independent of
$j$,
\begin{equation}\label{eq:theta_delta_bound}
\begin{aligned}
\|\Theta_j(s)\|&\le C\|\Delta_{j-1}(s)\|,\\
\|\Theta_j^*K_j\Theta_j\|
&\le C\|\Delta_{j-1}(s)\|.
\end{aligned}
\end{equation}
The coercivity of $K_j$, the uniform bound for $P_j$, and
\eqref{eq:iteration_feedback} also show that $\{\varphi_j\}$, and hence
$\{\widehat A_j\}$ and $\{\widehat C_j\}$, are uniformly essentially bounded.
The second estimate in \eqref{eq:theta_delta_bound} bounds the forcing operator in
\eqref{eq:iteration_difference}. Since the pairs
$(\widehat A_j,\widehat C_j)$ are bounded uniformly in $j$, the uniform-family
part of Lemma~\ref{lem:mild_lyapunov_uniform_estimate} gives
\begin{equation}\label{eq:iteration_gronwall}
\|\Delta_j(s)\|
\le C_0\int_s^T\|\Delta_{j-1}(r)\|\,dr,
\qquad j\ge1,
\end{equation}
where $C_0$ is independent of $j$. If
$D_0=\sup_{s\in[t,T]}\|\Delta_0(s)\|$, induction yields
\begin{equation}\label{eq:iteration_factorial}
\|\Delta_j(s)\|
\le D_0\frac{[C_0(T-s)]^j}{j!},
\qquad s\in[t,T].
\end{equation}
Thus $\sum_{j\ge0}\Delta_j$ converges uniformly in operator norm, so that
$P_j\to P$ uniformly on $[t,T]$ for some
$P\in C_{\mathcal S}([t,T];\mathcal S(H))$. This factorial estimate replaces the
finite-dimensional compactness/Ascoli step in \cite{wang2023stochastic} and is the
essential infinite-dimensional ingredient from \cite{wang2025lqjumps}.
Notice that no operator-norm continuity of the individual $P_j$ is required:
statewise continuity in $C_{\mathcal S}$, together with uniform convergence in
operator norm, is sufficient to place the limit $P$ in $C_{\mathcal S}$.

The uniform convergence implies
\[
\begin{aligned}
&K_j\to\Lambda^\gamma(P),\\
&L_j\to\Phi(P)^*,\\
&\varphi_j\to\varphi
:=-\bigl(\Lambda^\gamma(P)\bigr)^{-1}\Phi(P)^*.
\end{aligned}
\]
in the corresponding essential-supremum operator norms. Moreover,
\eqref{eq:iteration_uniform_coercivity} passes to the limit and gives
\begin{equation}\label{eq:limit_strong_regularity}
\Lambda^\gamma(P(s))\ge\varepsilon^2I_V
\quad\text{for a.e. }s\in[t,T].
\end{equation}
Passing to the limit in the mild form of \eqref{eq:iteration_lyapunov} gives
\[
\begin{aligned}
&0={}\dot P+P(A+A_1+C_1\varphi)+(A+A_1+C_1\varphi)^*P\\
&+(A_2+C_2\varphi)^*P(A_2+C_2\varphi)+\gamma^2\varphi^*\varphi-M^*M,\\
&P(T)=0,
\end{aligned}
\]
in the mild sense. Substituting
$\varphi=-(\Lambda^\gamma(P))^{-1}\Phi(P)^*$ reduces this equation exactly to
\eqref{eq:hinf_riccati}. Therefore $P$ is a strongly regular mild solution.

\smallskip
\noindent\textbf{Step 4: uniqueness, feedback, and value.}
Let $P$ and $\widetilde P$ be two strongly regular mild solutions. On the bounded
set containing their ranges, with the two coercivity constants replaced by their
minimum, the nonlinear map
\[
\mathcal N(R)=\Phi(R)\big(\Lambda^\gamma(R)\big)^{-1}\Phi(R)^*
\]
is locally Lipschitz in operator norm. Indeed, this follows from
$K^{-1}-\widetilde K^{-1}=K^{-1}(\widetilde K-K)\widetilde K^{-1}$ and the linear
dependence of $\Phi(R)$ and $\Lambda^\gamma(R)$ on $R$. Subtracting the two mild
Riccati identities therefore yields
\[
\|P(s)-\widetilde P(s)\|
\le C\int_s^T\|P(r)-\widetilde P(r)\|\,dr.
\]
Backward Gronwall's inequality gives $P=\widetilde P$.

Finally, take $\varphi=\Psi_P$ in
Lemma~\ref{lem:closed_loop_perturbation}. Since
$\mathfrak N_{\Psi_P}=0$ and the corresponding input transformation is bijective
by Proposition~\ref{prop:feedback_input_isomorphism},
we may rename the original disturbance. For a nonzero initial state this
transformation is the same bounded isomorphism plus the fixed shift
$-\Psi_PX^{t,\xi;0}$, and hence remains bijective. We obtain, for every admissible $v$,
\[
\begin{aligned}
J_1(t,\xi;0,v)
&=\mathbb E\langle\xi,P(t)\xi\rangle\\
&\hspace{-0.3cm}\quad+\mathbb E\int_t^T
\langle\Lambda^\gamma(P)(v-\Psi_PX),v-\Psi_PX\rangle\,ds.
\end{aligned}
\]
The last term is nonnegative and vanishes exactly for the unique closed-loop
solution $v^*=\Psi_PX^*$. This proves the feedback formula and the value identity.
\end{pf}

\section{Closed-Loop Nash Verification via Coupled Operator Riccati Equations}

This section investigates the closed-loop Nash equilibrium problem for the
infinite-dimensional mixed $H_2/H_\infty$ control system. The two performance criteria
lead to a system of coupled operator Riccati equations.

To characterize the coupled game strategies, let the feedback gains satisfy
\[
\begin{aligned}
K_U(\cdot)&\in L^\infty(t,T;\mathcal L(H;U)),\\
K_V(\cdot)&\in L^\infty(t,T;\mathcal L(H;V)).
\end{aligned}
\]
We use the following notation:
\begin{align*}
&\mathcal A_U=A+A_1+B_1K_U,\mathcal C_U=A_2+B_2K_U,\\
&\mathcal A_V=A+A_1+C_1K_V,
\mathcal C_V=A_2+C_2K_V,\\
&\Lambda_V(P_1) = \gamma^2 I_V + C_2^* P_1 C_2, \\
&\Lambda_U(P_2) = I_U + B_2^* P_2 B_2, \\
&\Upsilon_V(P_1,K_U)=P_1C_1+\mathcal C_U^*P_1C_2, \\
&\Upsilon_U(P_2,K_V)=P_2B_1+\mathcal C_V^*P_2B_2.
\end{align*}

\begin{assum} \textit{(Cross-coupled Riccati solvability)} \label{assum:cross_riccati}
There exist $P_1,P_2\in C_{\mathcal S}([t,T];\mathcal S(H))$ and feedback gains $K_U\in L^\infty(t,T;\mathcal L(H;U))$, $K_V\in L^\infty(t,T;\mathcal L(H;V))$ such that $P_1(T)=P_2(T)=0$ and the following cross-coupled equations hold in the mild form specified below:
\begin{equation}
\left\{
\begin{aligned}
&0={}\dot P_1+\mathcal A_U^*P_1+P_1\mathcal A_U
+\mathcal C_U^*P_1\mathcal C_U-M^*M\\
&-K_U^*K_U-\Upsilon_V(P_1,K_U)\Lambda_V(P_1)^{-1}
\Upsilon_V(P_1,K_U)^*,\\
&\Lambda_V(P_1(s))\ge \delta_1 I_V,
\end{aligned}
\right.
\label{eq:riccati_v}
\end{equation}
\noindent and
\begin{equation}
\left\{
\begin{aligned}
&0={}\dot P_2+\mathcal A_V^*P_2+P_2\mathcal A_V
+\mathcal C_V^*P_2\mathcal C_V+M^*M\\
&-\Upsilon_U(P_2,K_V)\Lambda_U(P_2)^{-1}
\Upsilon_U(P_2,K_V)^*,\\
&\Lambda_U(P_2(s))\ge \delta_2 I_U,
\end{aligned}
\right.
\label{eq:riccati_u}
\end{equation}
for some constants $\delta_1,\delta_2>0$. Moreover,
\begin{equation}
\begin{aligned}
K_V&=-\Lambda_V(P_1)^{-1}\Upsilon_V(P_1,K_U)^*,\\
K_U&=-\Lambda_U(P_2)^{-1}\Upsilon_U(P_2,K_V)^*.
\end{aligned}
\label{eq:feedback_gains}
\end{equation}
We refer to \eqref{eq:riccati_v}--\eqref{eq:feedback_gains} collectively as the
cross-coupled Riccati equations.
Put $F_U=A_1+B_1K_U$, $G_U=A_2+B_2K_U$,
$F_V=A_1+C_1K_V$, and $G_V=A_2+C_2K_V$. Define
\[
\begin{aligned}
\mathfrak R_1&=P_1F_U+F_U^*P_1+G_U^*P_1G_U-M^*M-K_U^*K_U\\
&\quad-\Upsilon_V\Lambda_V^{-1}\Upsilon_V^*,\\
\mathfrak R_2&=P_2F_V+F_V^*P_2+G_V^*P_2G_V+M^*M\\
&\quad-\Upsilon_U\Lambda_U^{-1}\Upsilon_U^*.
\end{aligned}
\]
The precise meaning of the two equations is
\begin{equation}\label{eq:coupled_mild}
\begin{aligned}
P_i(s)x&=\int_s^T S(r-s)^*\mathfrak R_i(r)S(r-s)x\,dr,\\
&\hspace{2cm}x\in H,\quad i=1,2.
\end{aligned}
\end{equation}
Their differential displays are shorthand for these identities. Testing on
$D(A)$ recovers the weak equations. The quadratic identities are justified
by Lemma~\ref{lem:general_mild_identity}; no membership in the optional
$W^{1,1}$ subclass is assumed.
\end{assum}

Before giving the Nash verification theorem, we state the $H_2$ quadratic identity that is
the analogue of Lemma~\ref{lem:j1_expansion} for the cost functional $J_2$, specialized to the
system with a fixed disturbance feedback $v=K_VX$.

\begin{lem} \textit{($H_2$ quadratic identity)} \label{lem:j2_expansion}
Let Assumptions~\ref{assum:wellposed} and~\ref{assum:output} hold. Fix a disturbance feedback
$K_V\in L^\infty(t,T;\mathcal L(H;V))$ and let $P_2\in C_{\mathcal S}([t,T];\mathcal S(H))$ satisfy
$P_2(T)=0$ and the $H_2$ Riccati identity in Assumption~\ref{assum:cross_riccati}
in the mild sense. Then for any initial state
$\xi\in L^2(\Omega,\mathcal F_t,\mathbb P;H)$ and any control $u\in\mathcal U[t,T]$, the
mild solution $X$ of the system below, where
$A_{K_V}=A+A_1+C_1K_V$ and $C_{K_V}=A_2+C_2K_V$,
\[
\begin{aligned}
&dX=\big[A_{K_V}X+B_1u\big]ds
+\big[C_{K_V}X+B_2u\big]dW,\\
&X(t)=\xi,
\end{aligned}
\]
satisfies
\[
\begin{aligned}
J_2(t,\xi;u,K_VX)
&=\mathbb E\langle \xi,P_2(t)\xi\rangle\\
&\hspace{-1.0cm}\quad+\mathbb E\int_t^T
\Big\langle\begin{pmatrix}X\\u\end{pmatrix},Q_2(P_2,K_V)
\begin{pmatrix}X\\u\end{pmatrix}\Big\rangle\,ds,
\end{aligned}
\]
where
\[
Q_2(P_2,K_V)=
\begin{pmatrix}
\mathcal S_2(P_2,K_V)&\Upsilon_U(P_2,K_V)\\
\Upsilon_U(P_2,K_V)^*&\Lambda_U(P_2)
\end{pmatrix}
\]
with
\[
\begin{aligned}
\mathcal S_2(P_2,K_V)
&=\dot P_2+A_{K_V}^*P_2+P_2A_{K_V}\\
&\quad+C_{K_V}^*P_2C_{K_V}+M^*M .
\end{aligned}
\]
The block differential notation is evaluated by substituting the Riccati equation; the resulting integrand is bounded. The identity follows in the mild sense of Lemma~\ref{lem:general_mild_identity}.
\end{lem}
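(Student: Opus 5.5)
The identity is a direct application of Lemma~\ref{lem:general_mild_identity} followed by a completion of the block form, exactly as in the proof of Lemma~\ref{lem:j1_expansion}. I will take $F=A_1+C_1K_V$, $G=A_2+C_2K_V$, $f=B_1u$, $g=B_2u$, and the bounded, self-adjoint, strongly measurable forcing
\[
Q=M^*M-\Upsilon_U(P_2,K_V)\Lambda_U(P_2)^{-1}\Upsilon_U(P_2,K_V)^*.
\]
Boundedness of $Q$ follows from $P_2\in C_{\mathcal S}$, the coercivity $\Lambda_U(P_2)\ge\delta_2I_U$ and the $L^\infty$ coefficients. With this choice, the mild $H_2$ identity \eqref{eq:coupled_mild} for $i=2$ is precisely the terminal-zero mild Lyapunov equation of Lemma~\ref{lem:general_mild_identity}, with $-\mathfrak R_2=\ldots$ rearranged so that $D=P_2$ is its unique mild solution. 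Uniqueness from that lemma identifies $D$ with $P_2$, and the state $X$ of the statement is exactly the state in that lemma with these $F,G,f,g$.

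Lemma~\ref{lem:general_mild_identity} then gives
\[
\mathbb E\int_t^T\langle QX,X\rangle ds=\mathbb E\langle P_2(t)\xi,\xi\rangle+\mathbb E\int_t^T\bigl(2\langle P_2X,B_1u\rangle+2\langle P_2C_{K_V}X,B_2u\rangle+\langle P_2B_2u,B_2u\rangle\bigr)ds .
\]
I then add $\mathbb E\int_t^T\|u\|^2ds$ to both sides. The cross terms combine into $2\langle \Upsilon_U(P_2,K_V)^*X,u\rangle$, because $\Upsilon_U=P_2B_1+C_{K_V}^*P_2B_2$. The quadratic terms combine into $\langle\Lambda_U(P_2)u,u\rangle$. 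Rearranging gives
\[
J_2=\mathbb E\langle\xi,P_2(t)\xi\rangle+\mathbb E\int_t^T\bigl[\langle\Upsilon_U\Lambda_U^{-1}\Upsilon_U^*X,X\rangle+2\langle\Upsilon_U^*X,u\rangle+\langle\Lambda_Uu,u\rangle\bigr]ds,
\]
where Assumption~\ref{assum:output} has been used to write $\|z\|^2=\|MX\|^2+\|u\|^2$.

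It remains to interpret the block form. Substituting the Riccati equation \eqref{eq:riccati_u} into the formal entry $\mathcal S_2(P_2,K_V)$ gives $\mathcal S_2=\Upsilon_U\Lambda_U^{-1}\Upsilon_U^*$, which is a bounded operator. With this evaluation, the integrand above is exactly $\langle (X,u)^\top,Q_2(P_2,K_V)(X,u)^\top\rangle$. Every term is integrable by the state estimate \eqref{eq:mild_estimate} and Cauchy--Schwarz.

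The only delicate point is justifying that the unbounded, formal entry $\mathcal S_2$ may be replaced by its bounded Riccati value. I will handle this by never differentiating $P_2$. The identity is derived entirely from the mild quadratic identity of Lemma~\ref{lem:general_mild_identity}, which already absorbs $A$ through the simultaneous Yosida approximation of $P_2$ and $X$. The block notation is therefore a definition consistent with the weak form on $D(A)$, and no $\mathcal R[t,T]$ regularity is needed.
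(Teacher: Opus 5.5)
Your proposal is correct and is essentially the paper's proof. You use the same data $F=A_1+C_1K_V$, $G=C_{K_V}$, $f=B_1u$, $g=B_2u$, $Q=M^*M-\Upsilon_U\Lambda_U^{-1}\Upsilon_U^*$ in Lemma~\ref{lem:general_mild_identity}, then add $\mathbb E\int_t^T\|u\|^2\,ds$, collect terms into $\Upsilon_U^*$ and $\Lambda_U$, and substitute \eqref{eq:riccati_u} to evaluate the upper-left block as $\Upsilon_U\Lambda_U^{-1}\Upsilon_U^*$; one cosmetic point is that no sign rearrangement of $\mathfrak R_2$ is needed, since the integrand in \eqref{eq:coupled_mild} for $i=2$ is literally $P_2F+F^*P_2+G^*P_2G+Q$ with your frozen $Q$.
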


\begin{pf}
Apply Lemma~\ref{lem:general_mild_identity} with
$F=A_1+C_1K_V$, $G=C_{K_V}$, $f=B_1u$, $g=B_2u$ and
\[
Q=M^*M-\Upsilon_U\Lambda_U^{-1}\Upsilon_U^*.
\]
Adding $\mathbb E\int_t^T\|u\|^2ds$ and collecting the cross and
quadratic terms gives the stated block expression. Substitution of the
Riccati identity makes its upper-left block
$\Upsilon_U\Lambda_U^{-1}\Upsilon_U^*$, so every term is defined
on $H\times U$ without applying $A$ to the mild state.
\end{pf}

\begin{thm} \textit{(Conditional Nash verification)} \label{thm:sufficiency_nash}
Under Assumptions~\ref{assum:wellposed}, \ref{assum:output}, and~\ref{assum:cross_riccati}, for any initial pair $(t,\xi)$, let $X^*$ be the mild solution to the closed-loop system:
\[
\begin{cases}
dX^*(s)=\big[A+A_1+B_1K_U+C_1K_V\big]X^*(s)\,ds \\
\qquad+\big[A_2+B_2K_U+C_2K_V\big]X^*(s)\,dW(s),\\
X^*(t)=\xi.
\end{cases}
\]
Then, the strategies $u^*(s)=K_U(s)X^*(s)$ and $v^*(s)=K_V(s)X^*(s)$ constitute a closed-loop Nash equilibrium satisfying:
\[
\begin{aligned}
J_1(t,\xi;u^*,v^*) \le J_1(t,\xi;K_UX,v), \\
J_2(t,\xi;u^*,v^*) \le J_2(t,\xi;u,K_VX).
\end{aligned}
\]
Furthermore, the fixed-control disturbance operator, whose output is the full
performance variable $z=(MX,K_UX)$, satisfies the strict bounded-real condition
$\|\mathcal L_{K_U}\|<\gamma$.
\end{thm}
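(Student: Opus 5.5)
The plan is to reduce each half of the Nash property to a completion-of-squares identity that has already been justified in the mild sense. The disturbance half goes through the stochastic bounded real lemma, Theorem~\ref{thm:sbrl}, applied to an auxiliary system. The control half goes through the $H_2$ quadratic identity, Lemma~\ref{lem:j2_expansion}. The strict attenuation claim then comes from the implication (ii)$\Rightarrow$(i) of Theorem~\ref{thm:sbrl} in the same auxiliary setting.

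\emph{Disturbance inequality and strict attenuation.} Fix $K_U$ and substitute $u=K_UX$ into \eqref{eq:system_model}. The resulting state equation is exactly of the form \eqref{eq:disturbance_system} with three replacements:
\begin{itemize}
\item $A_1$ is replaced by $F_U=A_1+B_1K_U$;
\item $A_2$ is replaced by $G_U=A_2+B_2K_U$;
\item $M$ is replaced by $\widehat M=(M,K_U)^{\top}\in L^\infty(t,T;\mathcal L(H;Z_1\times U))$.
\end{itemize}
The last replacement uses Assumption~\ref{assum:output}, which gives $\|z\|_Z^2=\|MX\|^2+\|K_UX\|^2$ and hence $\widehat M^*\widehat M=M^*M+K_U^*K_U$. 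These coefficients are bounded and strongly measurable, so Assumption~\ref{assum:wellposed} holds for the auxiliary system.

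Under this dictionary, the following identifications hold:
\begin{itemize}
\item $\Phi(P_1)$ becomes $P_1C_1+G_U^*P_1C_2=\Upsilon_V(P_1,K_U)$;
\item $\Lambda^\gamma(P_1)=\Lambda_V(P_1)$;
\item $\mathfrak R(P_1)$ is exactly $\mathfrak R_1$.
\end{itemize}
So the mild identity \eqref{eq:coupled_mild} for $i=1$, together with $\Lambda_V(P_1)\ge\delta_1I_V$, says that $P_1$ is a strongly regular mild solution of the auxiliary version of \eqref{eq:hinf_riccati}.

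Theorem~\ref{thm:sbrl} then gives two conclusions. First, $\|\mathcal L_{K_U}\|<\gamma$ for the full output $z$. Second, for every $v$,
\[
J_1(t,\xi;K_UX,v)\ge\mathbb E\langle\xi,P_1(t)\xi\rangle,
\]
with equality for the feedback $\Psi_{P_1}=-\Lambda_V^{-1}\Upsilon_V^*=K_V$, by \eqref{eq:feedback_gains}. With $v=K_VX$ and $u=K_UX$, the state is precisely $X^*$ by uniqueness in Theorem~\ref{thm:existence_uniqueness}. Therefore $J_1(t,\xi;u^*,v^*)=\mathbb E\langle\xi,P_1(t)\xi\rangle$, which yields the first inequality.

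\emph{Control inequality.} Fix $K_V$ and apply Lemma~\ref{lem:j2_expansion}. Substituting the $P_2$ Riccati identity, the upper-left block of $Q_2$ becomes $\Upsilon_U\Lambda_U^{-1}\Upsilon_U^*$. Using $\Lambda_UK_U=-\Upsilon_U^*$, the integrand is
\[
\langle\Upsilon_U\Lambda_U^{-1}\Upsilon_U^*X,X\rangle+2\langle\Upsilon_U^*X,u\rangle+\langle\Lambda_Uu,u\rangle
=\langle\Lambda_U(u-K_UX),u-K_UX\rangle\ge0,
\]
by $\Lambda_U(P_2)\ge\delta_2I_U$. Hence
\[
J_2(t,\xi;u,K_VX)\ge\mathbb E\langle\xi,P_2(t)\xi\rangle,
\]
with equality when $u=K_UX$. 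In that case the state is again $X^*$, which gives the second inequality.

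\emph{Main obstacle.} The main technical point is checking that the auxiliary system legitimately fits Theorem~\ref{thm:sbrl}, whose statement is for the original coefficients. One must verify that the enlarged output space $Z_1\times U$ and the $K_U$-dependent bounded coefficients are allowed. One must also verify that the value and optimal-feedback part of the theorem, not merely the gain equivalence, transfers to the auxiliary system. Since every lemma used in Theorem~\ref{thm:sbrl} only requires bounded, strongly measurable $A_1,A_2,C_1,C_2,M$, I would state this transfer as a remark rather than reprove it. The second, lesser point is measurability and boundedness of $\Lambda_U^{-1}$ and $\Lambda_V^{-1}$, which follow from the uniform lower bounds. This ensures the completed-square integrands are bounded, so that Lemma~\ref{lem:general_mild_identity} applies without any $W^{1,1}$ regularity.
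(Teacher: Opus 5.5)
Your proposal is correct and is essentially the paper's argument. The paper also treats the fixed-control subsystem as a disturbance system with coefficients $A_1+B_1K_U$, $A_2+B_2K_U$ and output weight $M^*M+K_U^*K_U$, and handles the control half through Lemma~\ref{lem:j2_expansion} exactly as you do; instead of citing Theorem~\ref{thm:sbrl} as a black box, it inlines the completed square $\mathbb E\langle\xi,P_1(t)\xi\rangle+\mathbb E\int_t^T\|\Lambda_V(P_1)^{1/2}(v-K_VX)\|_V^2\,ds$ and combines it with Proposition~\ref{prop:feedback_input_isomorphism} for $\Psi=K_V$ to get $\|\mathcal L_{K_U}\|^2\le\gamma^2-\delta_1m_U^2$. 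One bookkeeping point in your route: item (ii) of Theorem~\ref{thm:sbrl} is phrased with ``unique'', but the proof of (ii)$\Rightarrow$(i) uses only existence, and Step~4 gives uniqueness among strongly regular mild solutions unconditionally, so your transfer remark suffices.
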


\begin{pf}
The ensuing quadratic identities follow from Lemma~\ref{lem:general_mild_identity}, applied to the two fixed-feedback subsystems.

Fixing $u=K_UX$ for an arbitrary $v\in\mathcal V[t,T]$, the state satisfies:
\[
\begin{aligned}
&dX = \big[\mathcal A_UX+C_1v\big]ds+\big[\mathcal C_UX+C_2v\big]dW, \\
&X(t) = \xi.
\end{aligned}
\]
Invoking the $H_\infty$ mild quadratic identity and \eqref{eq:riccati_v}, completing the square with $K_V=-\Lambda_V(P_1)^{-1}\Upsilon_V(P_1,K_U)^*$ yields:
\[
\begin{aligned}
J_1(t,\xi;K_UX,v) &= \mathbb E\langle \xi,P_1(t)\xi\rangle \\
&\hspace{-0.3cm}\quad + \mathbb E\int_t^T \big\|\Lambda_V(P_1)^{1/2}(v-K_VX)\big\|_V^2\,ds.
\end{aligned}
\]
The coercivity $\Lambda_V(P_1)\ge\delta_1I_V$ shows that equality is possible
only when $v=K_VX$. The bounded-feedback closed-loop equation is well posed, so
this relation determines the unique minimizing response $v^*=K_VX^*$.

Symmetrically, fixing $v=K_VX$ for an arbitrary $u\in\mathcal U[t,T]$, Lemma~\ref{lem:j2_expansion} and \eqref{eq:riccati_u} allow an analogous completion of squares via $K_U=-\Lambda_U(P_2)^{-1}\Upsilon_U(P_2,K_V)^*$ to produce:
\[
\begin{aligned}
J_2(t,\xi;u,K_VX) &= \mathbb E\langle \xi,P_2(t)\xi\rangle \\
&\hspace{-0.3cm}\quad + \mathbb E\int_t^T \big\|\Lambda_U(P_2)^{1/2}(u-K_UX)\big\|_U^2\,ds.
\end{aligned}
\]
Since $\Lambda_U(P_2)\ge\delta_2I_U$, equality is possible only when
$u=K_UX$. Well-posedness of the corresponding bounded-feedback equation then
determines $u^*=K_UX^*$ uniquely and establishes the closed-loop Nash
equilibrium.

It remains to verify that the attenuation conclusion concerns the full output
used in Section~\ref{sec:game}. Set $\xi=0$ in the first quadratic identity. Because
$u=K_UX$, its left-hand side is exactly
\[
\gamma^2\|v\|_{\mathcal V[t,T]}^2
-\|\mathcal L_{K_U}v\|_{L^2_{\mathbb F}(t,T;Z)}^2,
\]
where
$\|\mathcal L_{K_U}v\|^2=\mathbb E\int_t^T
(\|MX\|_{Z_1}^2+\|K_UX\|_U^2)\,ds$.
For the fixed-control subsystem, Proposition~\ref{prop:feedback_input_isomorphism}
applies with the bounded coefficients $A_1+B_1K_U$ and $A_2+B_2K_U$ and with
$\Psi=K_V$. Hence there is an $m_U>0$ such that
\[
\|v-K_VX\|_{\mathcal V[t,T]}\ge m_U\|v\|_{\mathcal V[t,T]}.
\]
Together with $\Lambda_V(P_1)\ge\delta_1I_V$, the quadratic identity yields
\[
\gamma^2\|v\|_{\mathcal V[t,T]}^2
-\|\mathcal L_{K_U}v\|_{L^2_{\mathbb F}(t,T;Z)}^2
\ge\delta_1m_U^2\|v\|_{\mathcal V[t,T]}^2.
\]
Taking the supremum over nonzero $v$ gives
$\|\mathcal L_{K_U}\|^2\le\gamma^2-\delta_1m_U^2<\gamma^2$, and therefore
$\|\mathcal L_{K_U}\|<\gamma$.
\end{pf}

\begin{rem}
For a verified pair, $P_2(s)\ge0$, because its value is the nonnegative
cost $J_2$. Also $P_1(s)\le0$, since the admissible response $v=0$
has nonpositive $J_1$. In particular $\Lambda_U(P_2)\ge I_U$.
These observations do not establish existence of a coupled pair on a
prescribed horizon. General regular Riccati characterizations may involve
generalized inverses and range conditions; ordinary bounded inverse
feedback formulas require the strong coercivity stated here.
\end{rem}

\subsection{Local Solvability of the Coupled Mild Equations}

\begin{thm}\textit{(Local terminal-interval existence and uniqueness)}
\label{thm:local_coupled}
Under Assumptions~\ref{assum:wellposed} and~\ref{assum:output}, fix
$\gamma>0$. There exist $h>0$ with $h\le T$ and $\rho>0$ such that
\eqref{eq:coupled_mild} and \eqref{eq:feedback_gains} have a unique
solution in the ball
\[
\max_{i=1,2}\sup_{s\in[T-h,T]}\|P_i(s)\|\le\rho
\]
of the mild solution space. The feedbacks are bounded and measurable, and
\[
\Lambda_U\ge\tfrac12 I_U,\qquad
\Lambda_V\ge\tfrac{\gamma^2}{2}I_V.
\]
The resulting pair is a closed-loop Nash strategy with full-output strict
attenuation on $[T-h,T]$. Uniqueness is asserted in this ball; extension
to the whole prescribed horizon is not asserted.
\end{thm}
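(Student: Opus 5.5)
The plan has two stages. First, eliminate the feedback gains: near $P_1=P_2=0$, the implicit relations \eqref{eq:feedback_gains} can be solved for $(K_U,K_V)$ as a Lipschitz function of $(P_1,P_2)$. Second, treat the two mild identities \eqref{eq:coupled_mild} as a fixed-point problem on a short terminal interval. Throughout, fix $\rho>0$ small enough that
\[
\rho\|B_2\|_\infty^2\le\tfrac12,\qquad \rho\|C_2\|_\infty^2\le\tfrac{\gamma^2}{2}.
\]
Then every pair in the $\rho$-ball satisfies $\Lambda_U(P_2)\ge\frac12 I_U$ and $\Lambda_V(P_1)\ge\frac{\gamma^2}{2}I_V$, and the inverses are bounded by $2$ and $2\gamma^{-2}$, respectively.

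\textbf{Step 1 (feedback block).} For fixed $s$, write \eqref{eq:feedback_gains} as a linear system in $(K_U,K_V)$:
\[
\Lambda_U K_U+B_2^*P_2C_2K_V=-(B_1^*P_2+B_2^*P_2A_2),
\]
\[
\Lambda_V K_V+C_2^*P_1B_2K_U=-(C_1^*P_1+C_2^*P_1A_2).
\]
This uses $\Upsilon_U^*=B_1^*P_2+B_2^*P_2(A_2+C_2K_V)$, and symmetrically for $\Upsilon_V^*$.

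The operator block $\mathcal M(P_1,P_2)$ acting on $\mathcal L(H;U)\times\mathcal L(H;V)$ is a perturbation of $\operatorname{diag}(I_U,\gamma^2I_V)$ of size $O(\rho)$. After possibly shrinking $\rho$, a Neumann series shows that it is invertible with a uniform bound. Its inverse, and therefore the gains $K_U(P_1,P_2)$ and $K_V(P_1,P_2)$, are bounded and Lipschitz in operator norm on the $\rho$-ball, with constants depending only on $\rho$, $\gamma$ and the coefficient bounds.

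Measurability carries over. Each $K$ is obtained pointwise from the strongly measurable data by a convergent Neumann series of products, so $K$ is strongly operator measurable and lies in $L^\infty$.

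\textbf{Step 2 (contraction).} Substitute these gains into $\mathfrak R_1$ and $\mathfrak R_2$ and define
\[
\mathcal T(P_1,P_2)(s)x=\Bigl(\int_s^T S(r-s)^*\mathfrak R_i(r)S(r-s)x\,dr\Bigr)_{i=1,2}
\]
on $X_h=C_{\mathcal S}([T-h,T];\mathcal S(H))^2$ restricted to the $\rho$-ball, using the sup operator-norm metric. By Step 1, $\mathfrak R_i$ is bounded and Lipschitz on the ball, so $\|\mathfrak R_i\|\le c_0+c_1\rho$.

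Let $\sup_{[0,T]}\|S(r)\|\le M_S$. The key point is that the $M^*M$ term gives $\|\mathcal T(P)\|\le hM_S^2(c_0+c_1\rho)$ and $\|\mathcal T(P)-\mathcal T(\tilde P)\|\le hM_S^2L_\rho\|P-\tilde P\|$. Choosing $h$ small makes $\mathcal T$ map the ball into itself and act as a contraction.

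Two further properties must be checked:
\begin{itemize}
\item Statewise continuity of $s\mapsto\mathcal T(P)(s)x$ follows from the strong continuity of $S$ and $S^*$ together with dominated convergence.
\item Self-adjointness holds because each $\mathfrak R_i$ is self-adjoint.
\end{itemize}
The Banach fixed point theorem on this complete metric space then gives a unique pair in the ball. By the construction of Step 1, this pair solves \eqref{eq:feedback_gains}.

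\textbf{Step 3 (verification).} On $[T-h,T]$ the fixed point satisfies Assumption~\ref{assum:cross_riccati} with $\delta_1=\gamma^2/2$ and $\delta_2=1/2$. Theorem~\ref{thm:sufficiency_nash} then yields the Nash property and full-output strict attenuation.

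\textbf{Uniqueness.} Any other solution of \eqref{eq:coupled_mild} and \eqref{eq:feedback_gains} lying in the ball has gains that solve the same linear system. Because the block is invertible, those gains coincide with the Step 1 gains, so that solution is also a fixed point of $\mathcal T$ and equals ours.

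\textbf{Main obstacle.} I expect Step 1 to be the delicate part: the implicit gain coupling must be shown to give bounded, measurable, Lipschitz gains uniformly in $s$. In particular, the $\rho$ chosen there has to be compatible with the self-mapping bound, which holds because $h$ is chosen only after $\rho$. The Lipschitz estimates for the quadratic terms $\Upsilon\Lambda^{-1}\Upsilon^*$ and $K_U^*K_U$ come from the resolvent identity, as in Step 4 of Theorem~\ref{thm:sbrl}.
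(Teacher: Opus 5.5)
Your proposal is correct and follows the paper's proof essentially step for step. It uses the same expansion of the stationarity identities into the block system with a Neumann-series inverse near $\operatorname{diag}(I_U,\gamma^2 I_V)$, the same smallness conditions on $\rho$ giving $\Lambda_U\ge\tfrac12 I_U$ and $\Lambda_V\ge\tfrac{\gamma^2}{2}I_V$, the same contraction of the mild integral map on the $\rho$-ball with $h$ chosen after $\rho$, and the same appeal to Theorem~\ref{thm:sufficiency_nash}. Your remark that any solution in the ball has its gains forced by the invertible block, and is therefore a fixed point of $\mathcal T$, makes explicit the uniqueness step that the paper leaves implicit.
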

\begin{pf}
Expand the two stationarity identities before solving for the feedbacks.
On $U\times V$ they form the block system
\begin{equation}\label{eq:feedback_block}
\mathbb B(P_1,P_2)\begin{pmatrix}K_U\\K_V\end{pmatrix}
=-\begin{pmatrix}B_1^*P_2+B_2^*P_2A_2\\
C_1^*P_1+C_2^*P_1A_2\end{pmatrix},
\end{equation}
where
\[
\mathbb B(P_1,P_2)=
\begin{pmatrix}
I_U+B_2^*P_2B_2&B_2^*P_2C_2\\
C_2^*P_1B_2&\gamma^2I_V+C_2^*P_1C_2
\end{pmatrix}.
\]
Let $\mathbb B_0=\operatorname{diag}(I_U,\gamma^2I_V)$.
Choose $\rho>0$ sufficiently small, using the essential-supremum
coefficient bounds, that for every pair with $\|P_i\|\le\rho$,
\[
\begin{aligned}
\|\mathbb B_0^{-1}(\mathbb B-\mathbb B_0)\|&\le\tfrac12,\\
\rho\|B_2\|_\infty^2&\le\tfrac12,\qquad
\rho\|C_2\|_\infty^2\le\tfrac{\gamma^2}{2}.
\end{aligned}
\]
The Neumann series gives a uniformly bounded inverse of $\mathbb B$.
Equation~\eqref{eq:feedback_block} therefore defines feedback maps
$\mathcal K_U(P_1,P_2)$ and $\mathcal K_V(P_1,P_2)$. They vanish at
$(0,0)$ and are uniformly bounded and Lipschitz in operator norm on
this ball. Indeed, the right-hand side is linear in the pair and
$\mathbb B^{-1}-\widetilde{\mathbb B}^{-1}
=\mathbb B^{-1}(\widetilde{\mathbb B}-\mathbb B)
\widetilde{\mathbb B}^{-1}$. The same Neumann series shows strong
operator measurability of the feedbacks and of their adjoints.
The two weight lower bounds follow from the last two inequalities.

Insert these maps into $\mathfrak R_i$. Products, adjoints and the
uniformly coercive weight inverses give constants $C_\rho,L_\rho<\infty$
such that, a.e. in time,
\[
\begin{aligned}
\max_i\|\mathfrak R_i(P)\|&\le C_\rho,\\
\max_i\|\mathfrak R_i(P)-\mathfrak R_i(\widetilde P)\|
&\le L_\rho\max_i\|P_i-\widetilde P_i\|.
\end{aligned}
\]
These constants depend only on $\gamma,\rho$ and the coefficient bounds.
Let $M_S=\sup_{0\le r\le T}\|S(r)\|$. On the complete closed ball
of pairs in $C_{\mathcal S}([T-h,T];\mathcal S(H))$, define
$\mathcal T$ by the right-hand side of \eqref{eq:coupled_mild}.
Its forcing is self-adjoint and bounded, so $\mathcal T$ takes values
in this space: statewise continuity follows from strong continuity of
$S,S^*$ and dominated convergence in the moving-limit integrals.
Moreover,
\[
\begin{aligned}
\|\mathcal T P\|_\infty&\le M_S^2hC_\rho,\\
\|\mathcal T P-\mathcal T\widetilde P\|_\infty
&\le M_S^2hL_\rho\|P-\widetilde P\|_\infty.
\end{aligned}
\]
Choose $0<h\le T$ with $M_S^2hC_\rho\le\rho$ and
$M_S^2hL_\rho<1$. The contraction theorem gives the unique fixed
point in the ball and its bounded feedbacks. It satisfies the mild
equations, stationarity and strong regularity, hence
Theorem~\ref{thm:sufficiency_nash} applies. This argument uses bounded
perturbations of an arbitrary $C_0$ generator and does not require
operator-norm time regularity of the Riccati pair.
\end{pf}

\begin{rem}
Local existence holds for each fixed $\gamma>0$ because the zero terminal
weights make the feedback block invertible. The interval supplied by the
proof may shrink as $\gamma$ decreases. This local theorem does not identify the numerical branch endpoint. The selected heat-equation example is established on the full horizon by the separate invariant-region argument in Theorem~\ref{thm:heat_global}. Beyond the small ball, positivity of the two
diagonal weights alone is not a substitute for invertibility of the
coupled feedback block.
\end{rem}

\section{An Illustrative Galerkin Projection for a Stochastic Heat Equation}

This section projects the abstract framework onto finite-dimensional spaces, verifying semigroup generation, coefficient boundedness, and output normalization for a distributed parameter model. Theorem~\ref{thm:local_coupled} guarantees a mild Nash design on a sufficiently short terminal interval for this model. For the selected heat-equation coefficients at $\gamma=0.09$, Theorem~\ref{thm:heat_global} additionally proves full-horizon solvability and spectral convergence analytically.

\subsection{Stochastic Heat-Equation Model}
\label{subsec:heat_model}

Let $H=L^2(0,1)$ and let $A$ be the Dirichlet Laplacian,
\[
Ax=x'',\qquad D(A)=H^2(0,1)\cap H_0^1(0,1).
\]
The operator $A$ is self-adjoint and negative and generates an analytic contraction
semigroup on $H$. Consider the stochastic heat equation
\[
\begin{aligned}
&dX(s)=\big[(A+A_1)X(s)+B_1u(s)+C_1v(s)\big]\,ds\\
&\quad+\big[A_2X(s)+B_2u(s)+C_2v(s)\big]\,dW(s),\\
&z(s)=\begin{pmatrix}MX(s)\\ Nu(s)\end{pmatrix},
\qquad X(t)=\xi .
\end{aligned}
\]
For a concrete illustration, take $U=V=Z_1=Z_2=H$ and
\[
\begin{aligned}
&A_1=0.1I,\quad A_2=0.2I,\quad B_1=I,\\
&B_2=0.5I,\quad C_1=C_2=0.3I,\\
&M=I,\quad N=I.
\end{aligned}
\]
With bounded, deterministic coefficients excluding $A$, and $N^*N=I_U$, this state-control-disturbance dependent diffusion model satisfies Assumptions~\ref{assum:wellposed} and~\ref{assum:output}. Physically, the scalar parameters represent a weak reaction term $A_1$, noise intensities $A_2$ and $B_2$, distributed disturbance couplings $C_1$ and $C_2$, and unit observation weights $M$ and $N$. As identity multiples, these coefficients ensure modal decoupling. This avoids spatial approximations within retained modes, requiring only numerical time integration for the scalar Riccati equations. Modal decoupling alone does not guarantee Riccati solvability; the invariant-region argument below supplies the necessary uniform bounds at $\gamma=0.09$.
\subsection{A Full-Horizon Solvable Heat-Equation Case}

\begin{thm}\textit{(Uniform modal bounds and a full-horizon Nash design)}
\label{thm:heat_global}
For the heat-equation coefficients above and $\gamma=0.09$, the coupled
mild Riccati system has a diagonal strongly regular solution on every
finite horizon $[0,T]$. Writing its modal values as $p_{i,k}(T-s)$,
the following bounds hold for every $k$ and $s$:
\[
-0.062\le p_{1,k}\le0,\qquad 0\le p_{2,k}\le0.12,
\]
\[
\begin{aligned}
\Lambda_{V,k}&\ge0.00252,\qquad \Lambda_{U,k}\ge1,\\
|K_{U,k}|&<0.266,\qquad |K_{V,k}|\le\frac{62}{7}.
\end{aligned}
\]
The diagonal feedbacks satisfy the full-horizon Nash and strict-attenuation
conclusions of Theorem~\ref{thm:sufficiency_nash}. Moreover, the spectral
Riccati and feedback operators, extended by zero off $H_n$, converge
uniformly in time in operator norm, with errors $O((n+1)^{-2})$.
For every fixed initial datum, the projected equilibrium states and
inputs converge in $C([0,T];L^2(\Omega;H))$ and $L^2_{\mathbb F}$,
respectively, and their two game costs converge.
\end{thm}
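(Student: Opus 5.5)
We will reduce the statement to a family of scalar coupled ODEs, one per Dirichlet mode. Let $e_k(x)=\sqrt2\sin(k\pi x)$ and $\lambda_k=-(k\pi)^2$, so $Ae_k=\lambda_ke_k$. Every coefficient other than $A$ is a multiple of $I$. We therefore make the diagonal ansatz $P_i=\sum_k p_{i,k}\,e_k\otimes e_k$, $K_U=\sum_kK_{U,k}\,e_k\otimes e_k$, $K_V=\sum_kK_{V,k}\,e_k\otimes e_k$. In backward time $\tau=T-s$, \eqref{eq:coupled_mild} then reduces modewise to a scalar system $\partial_\tau p_{i,k}=f_i(\lambda_k,p_{1,k},p_{2,k})$ with $p_{i,k}(0)=0$. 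The gains $(K_{U,k},K_{V,k})$ are obtained by solving the scalar $2\times2$ version of \eqref{eq:feedback_block}, whose matrix is
\[
\begin{pmatrix}1+0.25p_2&0.15p_2\\ 0.15p_1&0.0081+0.09p_1\end{pmatrix}.
\]
Explicitly, $\Lambda_{V,k}=0.0081+0.09p_{1,k}$ and $\Lambda_{U,k}=1+0.25p_{2,k}$. The first key step is to show that on the rectangle $\mathcal Q=[-0.062,0]\times[0,0.12]$ this determinant is bounded below. Consequently the gains are smooth functions of $(p_1,p_2)$ on $\mathcal Q$, independent of $k$, obey the stated bounds $|K_U|<0.266$ and $|K_V|\le 62/7$, and satisfy $\Lambda_V\ge0.0081-0.09\cdot0.062=0.00252$ and $\Lambda_U\ge1$. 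This is elementary estimation using Cramer's rule.

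The second step, which I expect to be the main obstacle, is to prove that $\mathcal Q$ is forward invariant for the backward-time ODE uniformly in $k$. The approach is to check the inward-pointing condition on each of the four faces, namely $f_1\ge0$ at $p_1=-0.062$, $f_1\le0$ at $p_1=0$, $f_2\ge0$ at $p_2=0$, and $f_2\le0$ at $p_2=0.12$, for every $p$ on the face and every $\lambda_k\le-\pi^2$.

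The $\lambda_k$-dependence enters only through the terms $2\lambda_kp_i$.
\begin{itemize}
\item On the faces $p_1=0$ and $p_2=0$ these terms vanish. There the sign comes from the forcing: $-1-K_U^2-\Upsilon_V^2/\Lambda_V<0$ for $p_1$, and $+1+\ldots$ for $p_2$. Here one must also check that the completion-of-squares term does not reverse the sign when $p_2=0$; since $\Upsilon_U=0$ there, it does not.
\item On the other two faces, $2\lambda_kp_i$ has the favorable sign and is monotone in $\lambda_k$. It therefore suffices to verify the face inequalities at $\lambda=-\pi^2$.
\end{itemize}
This reduces the step to a finite number of explicit polynomial/rational inequalities in one free variable on each face. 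I would try to establish these by crude interval bounds on the gains from the first step, and fall back on a monotonicity argument in the free variable if the crude bounds are too lossy.

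Given invariance, the right-hand side is locally Lipschitz on a neighborhood of $\mathcal Q$, so each modal solution exists on all of $[0,T]$ and stays in $\mathcal Q$.

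The third step assembles the operators. Since all modal values are bounded uniformly in $k$, the diagonal series define $P_i\in C_{\mathcal S}([0,T];\mathcal S(H))$ and bounded measurable feedbacks. Strong continuity follows from dominated convergence over modes. Because $S(r)^*=S(r)$ is diagonal with entries $e^{\lambda_kr}$, the modal variation-of-constants formula is exactly \eqref{eq:coupled_mild} tested on $e_k$. Both sides are bounded operators, so the identity extends to all $x\in H$. Strong regularity holds with $\delta_1=0.00252$ and $\delta_2=1$, and Theorem~\ref{thm:sufficiency_nash} then yields the Nash and strict-attenuation conclusions.

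The fourth step is spectral convergence. Write the modal equation in the mild form $p_{i,k}(\tau)=\int_0^\tau e^{2\lambda_k(\tau-r)}g_{i,k}(r)\,dr$. Here $g_{i,k}$ is uniformly bounded on $\mathcal Q$, because the forcing terms $\pm1$ and the remaining bounded coefficients stay bounded there. This gives $|p_{i,k}|\le C/|\lambda_k|=O(k^{-2})$, and hence $|K_{\cdot,k}|=O(k^{-2})$ by the Lipschitz dependence of the gains on $p$ near $0$. The projected objects are the truncations to $k\le n$. Their operator-norm error equals $\sup_{k>n}$ of the modal sizes, which is $O((n+1)^{-2})$ uniformly in time.

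Finally, the projected and full closed-loop states solve equations whose bounded coefficients differ by $O((n+1)^{-2})$. The mild formulation, It\^o isometry and Gronwall's inequality, as in Theorem~\ref{thm:existence_uniqueness}, therefore give convergence in $C([0,T];L^2(\Omega;H))$. Convergence of the inputs in $L^2_{\mathbb F}$ follows, and the costs $J_1,J_2$ converge by continuity of quadratic functionals.
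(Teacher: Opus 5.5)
Your outline follows the paper's proof: modal decoupling, the $2\times2$ Cramer solve with determinant $d=0.0081+0.09p_1+0.002025p_2\ge0.00252$ on $[-0.062,0]\times[0,0.12]$, inward pointing on the four faces reduced to the first mode, modal variation of constants, and $O(k^{-2})$ decay. The gap is in the step you single out as the crux. Neither of the tools you propose closes the face $p_1=-r$, $r=0.062$. On that face the derivative depends on $p_2$ only through $u=K_U$, which sweeps $[u_b,0]$ with $u_b=-408/1535$. It equals
\[
f_1(u)=-r\bigl[2(a_k+u)+(0.2+0.5u)^2\bigr]-1-u^2-\frac{r^2(0.36+0.15u)^2}{g^2-0.09r},
\]
with $a_k=0.1-k^2\pi^2$ and $g=0.09$. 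For $k=1$ the true margins are only about $0.011$ at $u=0$ and $0.017$ at $u=u_b$.

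Term-by-term interval bounds over $[u_b,0]$ take the worst case of $-u^2$ at $u=u_b$ (about $0.071$) and of the Riccati term at $u=0$ (about $0.198$) at the same time. They return a lower bound of about $-0.06$. Your monotonicity fallback also fails, because $f_1'(0)\approx-0.30<0<f_1'(u_b)\approx0.26$. So $f_1$ has an interior maximum in $u$, and since $u$ is strictly monotone in $p_2$, $f_1$ is not monotone in $p_2$ either. The missing idea is to treat $f_1$ as a function of $u$ alone and observe that it is a concave quadratic, with leading coefficient $-1-r/4-0.0225r^2/(g^2-0.09r)<0$. Its minimum over $[u_b,0]$ is then attained at an endpoint, and two rational evaluations finish this face.

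Two related estimates also need more than crude bounds.
\begin{enumerate}
\item On the face $p_2=b=0.12$, the derivative is increasing in $v=K_V\ge0$ and is positive (about $0.19$) at $v=62/7$. The global gain bound from your first step is therefore too weak there. You must use the face-specific range $K_V\in[0,v_b]$, $v_b=2418/307$, which is Cramer's formula evaluated at $(p_1,p_2)=(-r,b)$. Your monotone fallback then gives a value of about $-0.06$.
\item Bounding the Cramer numerator and denominator of $K_U$ separately yields only $|K_U|\le0.12\cdot1.1g^2/0.00252\approx0.42$. The stated bound $0.266$ requires showing that $|K_U|$ increases with both $-p_1$ and $p_2$, so that its maximum is $|u_b|$ at the corner $(-r,b)$.
\end{enumerate}

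The assembly and the $O((n+1)^{-2})$ operator estimates are sound. In the state-convergence step, your Gronwall comparison must also carry the initial-datum term $\mathbb E\|(I-\Pi_n)\xi\|^2$, so convergence holds but without a uniform rate. The paper avoids the comparison altogether: since all coefficients commute with $\Pi_n$, it has $X_n=\Pi_nX^*$ exactly.
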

\begin{pf}
Set $a_k=0.1-k^2\pi^2$, $g=0.09$, and $\tau=T-s$. The scalar
stationarity system has determinant
\[
d=g^2(1+0.25p_2)+0.09p_1
\]
and gives
\begin{equation}\label{eq:heat_explicit_feedback}
\begin{aligned}
k_u&=-\frac{p_2(1.1g^2+0.045p_1)}{d},\\
k_v&=\frac{p_1(-0.36+0.075p_2)}{d}.
\end{aligned}
\end{equation}
With $c_u=0.2+0.5k_u$, $c_v=0.2+0.3k_v$, the modal ODEs are
\begin{equation}\label{eq:heat_modal_odes}
\begin{aligned}
\frac{dp_1}{d\tau}&=[2(a_k+k_u)+c_u^2]p_1-1-k_u^2\\
&\quad-\frac{[p_1(0.3+0.3c_u)]^2}{g^2+0.09p_1},\\
\frac{dp_2}{d\tau}&=[2(a_k+0.3k_v)+c_v^2]p_2+1\\
&\quad-\frac{[p_2(1+0.5c_v)]^2}{1+0.25p_2}.
\end{aligned}
\end{equation}
with $p_1(0)=p_2(0)=0$. Their vector field is smooth on a neighborhood
of the rectangle $\mathcal D=[-r,0]\times[0,b]$, where
$r=31/500$ and $b=3/25$. Indeed,
$g^2-0.09r=63/25000$ and $d\ge63/25000$ on this rectangle.

We verify inward pointing on all four faces. At $p_1=0$ the first
derivative is $-1-k_u^2<0$, and at $p_2=0$ the second derivative is $1$.
On $p_1=-r$, formula~\eqref{eq:heat_explicit_feedback} gives
$u_b\le k_u\le0$, where
\[
u_b=-\frac{b(1.1g^2-0.045r)}{g^2(1+0.25b)-0.09r}
=-\frac{408}{1535}.
\]
The first derivative on this face is the concave quadratic in $u=k_u$
\[
\begin{aligned}
f_1(u)&=-r[2(a_k+u)+(0.2+0.5u)^2]-1-u^2\\
&\quad-\frac{r^2(0.36+0.15u)^2}{g^2-0.09r}.
\end{aligned}
\]
Consequently its minimum over $[u_b,0]$ occurs at an endpoint.
Since $a_k\le-9.75$, substitution of the rational upper bound
$a_k=-9.75$ yields $f_1(0)>0.008$ and $f_1(u_b)>0.014$.
Decreasing $a_k$ only increases $f_1$, so this face points inward for
every mode.

On $p_2=b$, the disturbance gain lies in $[0,v_b]$, where
\[
v_b=\frac{r(0.36-0.075b)}{g^2(1+0.25b)-0.09r}
=\frac{2418}{307}.
\]
The second derivative on this face, expressed as a quadratic in $v=k_v$,
is
\[
\begin{aligned}
f_2(v)&=b[2(a_k+0.3v)+(0.2+0.3v)^2]+1\\
&\quad-\frac{b^2(1.1+0.15v)^2}{1+0.25b}.
\end{aligned}
\]
Its quadratic coefficient is $0.09b/(1+0.25b)>0$, and its linear
coefficient is $0.72b-0.33b^2/(1+0.25b)>0$.
It is therefore increasing for $v\ge0$. At $a_k=-9.75$,
direct rational substitution gives $f_2(v_b)<-0.057$; a smaller
$a_k$ decreases this derivative. This proves invariance of $\mathcal D$.
The endpoint inequalities involve only rational arithmetic; the use of
$a_k\le-9.75$ is justified already by $\pi>3.14$.

The scalar ODE solution starting at the origin consequently stays in
$\mathcal D$ and extends to every finite $\tau$. The control magnitude
in \eqref{eq:heat_explicit_feedback} increases with $-p_1$ and $p_2$
on this rectangle, hence it is at most $|u_b|<0.266$.
The disturbance magnitude increases with $-p_1$ and decreases with
$p_2$, hence it is at most $0.36r/(g^2-0.09r)=62/7$.
Define $P_i(s)e_k=p_{i,k}(T-s)e_k$ and define $K_U,K_V$ similarly.
The uniform bounds and coordinatewise continuity give bounded,
self-adjoint strongly continuous $P_i$ and bounded measurable feedbacks.
The scalar variation-of-constants identities give
\eqref{eq:coupled_mild} first on finite modal sums and then on $H$ by
dominated convergence. The weight bounds thus verify every hypothesis
of Theorem~\ref{thm:sufficiency_nash} on the full horizon.

Finally, write each modal ODE as $p'_{i,k}=2a_kp_{i,k}+b_{i,k}$.
All terms in $b_{i,k}$ are uniformly bounded by the rectangle, feedback
and weight bounds, with a constant $C$ independent of $k$ and time.
Variation of constants gives
\[
|p_{i,k}(\tau)|\le\frac{C}{2(k^2\pi^2-0.1)}.
\]
The explicit feedback formulas and $d\ge0.00252$ then give the same
$O(k^{-2})$ decay for $|K_{U,k}|+|K_{V,k}|$. Since a diagonal
operator norm is the supremum of its modal magnitudes, these estimates
prove the asserted operator-norm convergence of the zero-extended
spectral operators. The projected closed-loop state is exactly
$\Pi_nX^*$ for the fixed datum $\Pi_n\xi$, because the diagonal
coefficients commute with $\Pi_n$. The homogeneous mild-state estimate
bounds its omitted tail by $C_T\mathbb E\|(I-\Pi_n)\xi\|^2$.
The bounded feedbacks give convergence of the inputs in $L^2_{\mathbb F}$.
Continuity of the quadratic costs in these spaces proves cost convergence.
This is a fixed-datum convergence result, independent of the changing
initial states used in the numerical table below.
\end{pf}

\subsection{Spectral Galerkin System}

Let $e_k(x)=\sqrt2\sin(k\pi x)$, so that $Ae_k=-k^2\pi^2e_k$, and let
$H_n=\operatorname{span}\{e_1,\ldots,e_n\}$. Denote by $\Pi_n:H\to H_n$ the
orthogonal projection and set
\[
\xi_n=\Pi_n\xi,\qquad u_n=\Pi_nu,\qquad v_n=\Pi_nv.
\]
The Galerkin state $X_n$ is defined as the unique solution of
\[
\begin{cases}
\begin{aligned}
dX_n(s)&=\big[(A_n+A_{1,n})X_n(s)+B_{1,n}u_n(s)\\
&+C_{1,n}v_n(s)\big]\,ds+\big[A_{2,n}X_n(s)+B_{2,n}u_n(s)\\
&+C_{2,n}v_n(s)\big]\,dW(s),
\end{aligned}
\\
z_n(s)=\begin{pmatrix}M_nX_n(s)\\N_nu_n(s)\end{pmatrix},\qquad
X_n(t)=\xi_n,
\end{cases}
\]
where
\[
A_n=\mathrm{diag}(-\pi^2,-4\pi^2,\ldots,-n^2\pi^2),
\]
and $F_n=\Pi_nF|_{H_n}$ for every bounded coefficient $F$. In the basis
$\{e_1,\ldots,e_n\}$,
\[
\begin{aligned}
&A_{1,n}=0.1I_n,\quad A_{2,n}=0.2I_n,\\
&B_{1,n}=I_n,\quad B_{2,n}=0.5I_n,\\
&C_{1,n}=C_{2,n}=0.3I_n,\quad M_n=N_n=I_n.
\end{aligned}
\]

Given the selected coefficients, $H_n$ is invariant under $A$ and all bounded operators, making the exact solution projection coincide with the Galerkin approximation. Computationally, $X_n$ is governed directly by the preceding finite-dimensional system rather than the geometric projection $X_n=\Pi_nX$. This projected model forms a finite-dimensional stochastic system with state-, control-, and disturbance-dependent diffusion. Its finite-horizon mixed $H_2/H_\infty$ formulation and coupled Riccati structure rigorously align with established finite-dimensional Nash game frameworks \cite{chen2004stochastic,zhang2006xuv,zhang2017stochastic_nash}. This section explicitly bridges these finite-dimensional theories with the infinite-dimensional spectral projection.
\subsection{Projected Coupled Riccati Equations}
\label{subsec:galerkin_riccati}

Let
\[
\begin{aligned}
&P_{1,n},P_{2,n}\in C^1([t,T];\mathbb S^n),\\
&K_{U,n},K_{V,n}\in L^\infty(t,T;\mathbb R^{n\times n}).
\end{aligned}
\]
Define
\[
\begin{aligned}
\mathcal A_{U,n}&=A_n+A_{1,n}+B_{1,n}K_{U,n},\\
\mathcal C_{U,n}&=A_{2,n}+B_{2,n}K_{U,n},\\
\mathcal A_{V,n}&=A_n+A_{1,n}+C_{1,n}K_{V,n},\\
\mathcal C_{V,n}&=A_{2,n}+C_{2,n}K_{V,n},\\
\Lambda_{V,n}&=\gamma^2I_n+C_{2,n}^{\top}P_{1,n}C_{2,n},\\
\Upsilon_{V,n}&=P_{1,n}C_{1,n}
+\mathcal C_{U,n}^{\top}P_{1,n}C_{2,n},\\
\Lambda_{U,n}&=I_n+B_{2,n}^{\top}P_{2,n}B_{2,n},\\
\Upsilon_{U,n}&=P_{2,n}B_{1,n}
+\mathcal C_{V,n}^{\top}P_{2,n}B_{2,n}.
\end{aligned}
\]
The matrix counterpart of \eqref{eq:riccati_v} is
\begin{equation}\label{eq:galerkin_riccati_v}
\begin{aligned}
&0={}\dot P_{1,n}+\mathcal A_{U,n}^{\top}P_{1,n}+P_{1,n}\mathcal A_{U,n}+\mathcal C_{U,n}^{\top}P_{1,n}\mathcal C_{U,n}\\
&-M_n^{\top}M_n-K_{U,n}^{\top}K_{U,n}-\Upsilon_{V,n}\Lambda_{V,n}^{-1}
\Upsilon_{V,n}^{\top},\\
& P_{1,n}(T)=0.
\end{aligned}
\end{equation}
The matrix counterpart of \eqref{eq:riccati_u} is
\begin{equation}\label{eq:galerkin_riccati_u}
\begin{aligned}
&0={}\dot P_{2,n}+\mathcal A_{V,n}^{\top}P_{2,n}+P_{2,n}\mathcal A_{V,n}+\mathcal C_{V,n}^{\top}P_{2,n}\mathcal C_{V,n}\\
&+M_n^{\top}M_n-\Upsilon_{U,n}\Lambda_{U,n}^{-1}
\Upsilon_{U,n}^{\top},\\
& P_{2,n}(T)=0.
\end{aligned}
\end{equation}
These equations are coupled with the feedback identities
\[
\begin{aligned}
K_{V,n}&=-\Lambda_{V,n}^{-1}\Upsilon_{V,n}^{\top},\\
K_{U,n}&=-\Lambda_{U,n}^{-1}\Upsilon_{U,n}^{\top}.
\end{aligned}
\]
Both projected Riccati equations are terminal-value problems with
$P_{1,n}(T)=P_{2,n}(T)=0$, so any numerical solution must integrate them backward
from $T$; no forward-time integration is used.
If the coupled system consisting of
\eqref{eq:galerkin_riccati_v}--\eqref{eq:galerkin_riccati_u} and the two feedback
identities admits a solution in the mild class stated above, and if the
following coercivity bounds hold uniformly in time,
\[
\begin{aligned}
\Lambda_{V,n}&\ge\delta_{1,n}I_n,\\
\Lambda_{U,n}&\ge\delta_{2,n}I_n,
\qquad\delta_{1,n},\delta_{2,n}>0.
\end{aligned}
\]
then the projected verification theorem yields the finite-dimensional feedback laws
\[
u_n^*(s)=K_{U,n}(s)X_n^*(s),\qquad
v_n^*(s)=K_{V,n}(s)X_n^*(s).
\]

\subsection{Numerical Verification}
\label{subsec:numerics}

We report reproducible spectral-Galerkin computations for the coefficient
choice of Section~\ref{subsec:heat_model} with $t=0$, $T=1$, and
$U=V=Z_1=Z_2=H$. Because all coefficients are scalar multiples of the
identity, the projected modes decouple exactly: every mode $k$ satisfies one scalar
$H_\infty$ Riccati equation, one scalar $H_2$ Riccati equation, and the two
scalar feedback identities $K_{V,k}=-\Lambda_{V,k}^{-1}\Upsilon_{V,k}$ and
$K_{U,k}=-\Lambda_{U,k}^{-1}\Upsilon_{U,k}$. We use the forward variable
$\tau=T-s$ for the terminal-value equations. The coupled Riccati equations are
integrated by the adaptive implicit Radau method implemented in SciPy, and the $2\times2$
linear system formed by the feedback identities is solved at every right-hand-side
evaluation. Fixed-feedback bounded-real equations are integrated with the same tolerances,
and both the induced gain and the endpoint of the coercive branch are located by
bisection. The reproduction script \texttt{reproduce\_numerics.py} records
the solver version, tolerances, bisection brackets, and sampled margins in
\texttt{numerical\_results.json}. The default relative and absolute ODE
tolerances are $2\times10^{-9}$ and $2\times10^{-11}$; the gain bisection
stops at bracket width $2\times10^{-8}$. A solve is rejected if the
effective disturbance weight reaches $10^{-10}$ or if integration fails.
This cutoff is a numerical branch criterion, not an interval-arithmetic
certificate. Dense output is sampled at $4001$ equally spaced time points
for margin and stationarity diagnostics. All reported nominal
$J_2$ values use $v\equiv0$; by contrast, every induced gain is the supremum over
nonzero admissible disturbances, estimated here through the BRL equivalence.
No deterministic-input restriction is used and no independent stochastic
input--output norm computation is claimed.
Tightening the relative and absolute tolerances by a factor of ten leaves
the displayed baseline gains, the branch endpoint, and the six-decimal
nominal costs unchanged. This sensitivity check is recorded in
\texttt{numerical\_results\_refined.json}; it is not a certified error bound.
Use the script option \texttt{--all-modal-gains} to reproduce the gain
columns for every retained mode in Table~\ref{tab:galerkin}.

The script integrates the modal system
\eqref{eq:heat_modal_odes}, using $g=\gamma$ in parameter studies,
with $\tau=T-s$ and zero initial Riccati
weights. At a fixed control feedback, the BRL equation is its first
equation with $k_u$ prescribed and $g$ replaced by the trial attenuation
level. The nominal cost is $q(T)$,
where $q(0)=0$ and
\[
\frac{dq}{d\tau}=[2(a_k+k_u)+(0.2+0.5k_u)^2]q+1+k_u^2.
\]
These formulas distinguish the nominal value from the coupled value
$p_2(T)$ and make the role of the full output weight $1+k_u^2$ explicit.

\subsubsection{The \texorpdfstring{$\gamma$}{gamma}-threshold and the bounded real lemma}
For the single disturbance channel ($u=0$), the computed modal gains attain
their maximum at $k=1$ for the coefficient choice above.
Fig.~\ref{fig:brl} shows the behavior of the strongly regular mild solution
of \eqref{eq:hinf_riccati} as a function of $\gamma$. The norm $|P(0)|$ and
the coercivity margin $\min_s\Lambda^\gamma(P(s))$ remain finite and positive,
respectively, at the sampled levels $\gamma>\gamma^*\approx0.088407$. As
$\gamma\downarrow\gamma^*$, $|P(0)|$ increases to the finite boundary value
$(\gamma^*)^2/0.09\approx0.08685$, whereas the coercivity margin vanishes.
Thus it is loss of strong regularity, not blow-up of $P(0)$, that marks the
threshold. Bisection of the strongly regular Riccati branch gives the threshold estimate $0.088407$. By Theorem~\ref{thm:sbrl}, this threshold estimates $\|\mathcal L\|$; it is a Riccati-based gain computation, not an independent numerical verification of the theorem.

\begin{figure*}[!t]
\centering
\includegraphics[width=\textwidth]{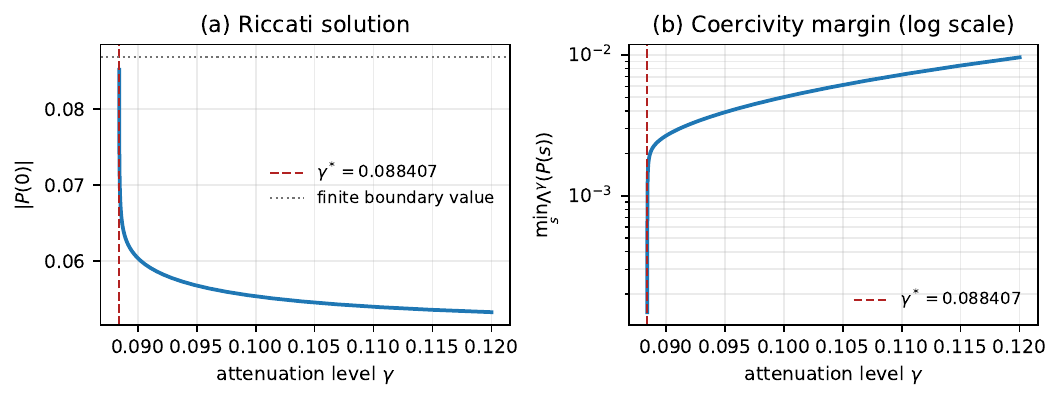}
\caption{The single-channel bounded real lemma at mode $k=1$. (a) Norm $|P(0)|$
of the strongly regular mild solution of \eqref{eq:hinf_riccati} as a function
of $\gamma$: it approaches a finite boundary value as
$\gamma\downarrow\gamma^*$. (b) Coercivity margin
$\min_s\Lambda^\gamma(P(s))$, shown on a logarithmic vertical scale: it
vanishes as $\gamma\downarrow\gamma^*$. The dashed vertical line marks
$\gamma^*=\|\mathcal L\|\approx0.088407$.}
\label{fig:brl}
\end{figure*}

\subsubsection{Solvability of the cross-coupled Riccati equations}
Fig.~\ref{fig:coupled} reports the analogous experiment for the cross-coupled
Riccati equations \eqref{eq:galerkin_riccati_v}--\eqref{eq:galerkin_riccati_u}.
These equations can be continued on a coercive numerical branch for
$\gamma>\gamma^*_c\approx0.088532$, with strictly positive margins
$\min_s\Lambda_V(P_1(s))>0$ and $\min_s\Lambda_U(P_2(s))>0$. Below the displayed
threshold our continuation loses positivity of $\Lambda_V$; this is evidence
for the endpoint of the computed strongly regular branch, not a proof that no
other Riccati pair exists. Throughout the sampled branch the achieved
closed-loop gain is strictly below the prescribed level and retains a positive
gap from $\gamma$ near $\gamma^*_c$; the calculation therefore does not support
an asymptotic equality between the two quantities. At $\gamma=0.09$,
$\min_s\Lambda_V=0.002791$, $\min_s\Lambda_U=1$, and
$K_U(0)=-0.1991$, whereas $K_V(0)=6.9762$. The relatively large disturbance
gain also indicates increasing sensitivity near the branch endpoint. These
computations are consistent with a parameter range satisfying the
finite-dimensional counterpart of Assumption~\ref{assum:cross_riccati}.
The projected theorem applies if the exact Riccati solution has the
required bounds; sampled positivity alone does not rigorously certify
those bounds. The numerical branch endpoint is not an analytic necessity threshold. At $\gamma=0.09$, the infinite-dimensional full-horizon assumption is independently established by Theorem~\ref{thm:heat_global}.

\begin{figure*}[!t]
\centering
\includegraphics[width=\textwidth]{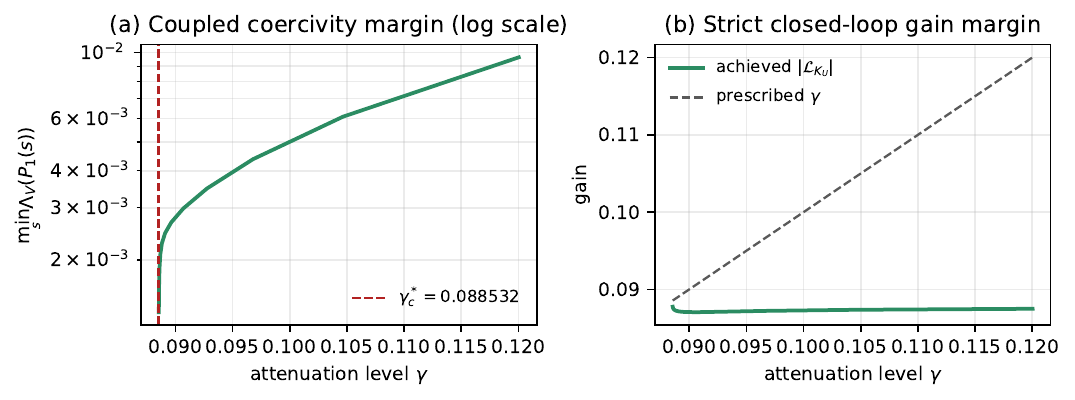}
\caption{The cross-coupled Riccati equations at mode $k=1$. (a) Minimum coercivity margin
$\min_s\Lambda_V(P_1(s))$ of the coupled $H_\infty$ equation as a function of
$\gamma$, shown on a logarithmic vertical scale; the dashed line marks the
endpoint $\gamma^*_c\approx0.088532$ of the computed coercive branch.
(b) Achieved closed-loop gain
$\|\mathcal L_{K_U}\|$ of the mixed design versus the prescribed level
$\gamma$: it remains strictly below $\gamma$ throughout the sampled branch.}
\label{fig:coupled}
\end{figure*}

\subsubsection{Baseline comparison}
Table~\ref{tab:baseline} compares, at $\gamma=0.09$, the four design cases on
the first mode, which is the worst among the computed modes. For each row, the induced gain is computed over all
nonzero disturbances, whereas the nominal cost uses $X(0)=e_1$ and $v\equiv0$.
The LQ regulator attains the smallest nominal cost, but the attenuation level is
not imposed during its synthesis; its strict-gain property is verified only
a posteriori by the fixed-feedback bounded-real Riccati equation. A bounded
scalar search over constant control gains in $[-0.6,0.2]$ gives a gain near
$0.087061$ and nominal cost near $0.05187$. The search is numerical and does
not certify a global optimum over all feedback strategies. The mixed
controller gives the reported gain $0.087047$ and is generated by the
cross-coupled Riccati equations. This small difference is not evidence of
global gain optimality: the constant-gain and time-varying designs use
different search classes, and neither optimization over all admissible
feedbacks nor a certified error bound is supplied.
Its displayed nominal cost $0.052169$ is evaluated with $v=0$ and must not be
confused with the Nash value against $v=K_VX$, which is
$P_2(0)=0.094943$ for $X(0)=e_1$. The coupled Riccati identity verifies the
corresponding best-response inequalities, while the reported nominal comparison
evaluates all four controllers under a common disturbance-free condition.

We also integrate the fixed-feedback scalar moment-cost equations directly,
rather than merely reading off the Riccati values. For $X(0)=e_1$,
the equilibrium costs are $J_1=-0.058985$ and $J_2=0.094943$, agreeing
with $P_1(0)$ and $P_2(0)$. Holding $K_U$ fixed and replacing $K_V$ by
$0$, $0.5K_V$ or $1.5K_V$ gives $J_1=-0.052169$, $-0.056931$ or
$-0.049280$, respectively. Holding $K_V$ fixed and using zero control
or the nominal LQ controller gives $J_2=0.098935$ or $0.096968$.
These selected deviations are consistent with the Nash inequalities;
they do not numerically exhaust all predictable deviations. The full
inequalities follow from the verification theorem under its hypotheses.

\begin{table*}[!t]
\centering
\caption{Closed-loop disturbance-attenuation gain and nominal quadratic cost
for four designs (mode $k=1$, $\gamma=0.09$). All nominal costs use
$X(0)=e_1$ and $v=0$. ``Enforced'' indicates whether the attenuation constraint
enters the synthesis; ``BRL check'' records a numerical bounded-real
Riccati check with a positive sampled coercivity margin. Case D additionally
has the analytic certificate of Theorem~\ref{thm:heat_global}; the numerical
checks alone are not rigorous certificates.}
\label{tab:baseline}
\begin{tabular}{clcccc}
\toprule
Case & Controller for $u$ & estimated $\Vert\mathcal L_{K_U}\Vert$ & nominal $J_2$ & Enforced? & BRL check \\
\midrule
A & open loop, $K_U=0$ & 0.0884 & 0.0513 & no & yes (uncoupled) \\
B & LQ feedback $K_U^{\rm LQ}$ & 0.0877 & 0.0511 & no & yes (a posteriori) \\
C & constant-gain $H_\infty$ feedback & 0.0871 & 0.0519 & yes & yes (uncoupled) \\
D & mixed feedback $K_U$ & 0.0870 & 0.0522 & yes & yes (analytic) \\
\bottomrule
\end{tabular}
\end{table*}

\subsubsection{Projection-order study}
Table~\ref{tab:galerkin} reports the projected quantities for increasing $n$ at
$\gamma=0.09$. The scalar-multiple coefficients diagonalize the projected
problem, and the computed open-loop gain, mixed gain, and minimum coercivity
constant attain their worst values on the first mode for every projection order
reported in the table. Consequently, these quantities stabilize already at
$n=1$. This is a numerical property of the present coefficient choice; modal
decoupling alone is not being invoked as a general monotonicity theorem. For the
last column we deliberately choose the changing unit
initial state
\[
\xi_n=\frac{1}{\sqrt n}\sum_{k=1}^n e_k
\]
and evaluate the mixed controller with $v=0$. The resulting quantity is the
average of the first $n$ modal nominal costs and decreases because increasingly
damped modes receive part of the fixed unit energy. Since the initial state
changes with $n$, this column is a projection-order diagnostic rather than a
Galerkin convergence test for a fixed datum. The invariance of the worst mode
nevertheless confirms the mode-by-mode structure of this diagonal example.
For each finite projection, the induced gain is the maximum of the modal
gains: orthogonality makes both input and output energies sums of modal
energies, giving the upper bound, and an input in a maximizing mode gives
the lower bound. This reasoning does not require independent modal noise;
all modes here share the same Brownian motion.

\begin{table*}[!t]
\centering
\caption{Projection-order study at $\gamma=0.09$: worst-mode open-loop gain
$\Vert\mathcal L\Vert_n$, worst-mode closed-loop gain of the mixed design,
minimum coercivity constant of $\Lambda_V(P_1)$, and the nominal cost of the
mixed controller under $v=0$ for
$\xi_n=n^{-1/2}\sum_{k=1}^n e_k$.}
\label{tab:galerkin}
\begin{tabular}{lcccc}
\toprule
$n$ & $\Vert\mathcal L\Vert_n$ & $\Vert\mathcal L_{K_U}\Vert_n$ & $\min_{k\le n}\min_s\Lambda_V(P_{1,k})$ & nominal $J_2(\xi_n)$ \\
\midrule
1 & 0.0884 & 0.0870 & 0.0028 & 0.052169 \\
2 & 0.0884 & 0.0870 & 0.0028 & 0.032435 \\
5 & 0.0884 & 0.0870 & 0.0028 & 0.015141 \\
10 & 0.0884 & 0.0870 & 0.0028 & 0.008007 \\
20 & 0.0884 & 0.0870 & 0.0028 & 0.004121 \\
50 & 0.0884 & 0.0870 & 0.0028 & 0.001678 \\
\bottomrule
\end{tabular}
\end{table*}

\subsection{Scope of the Illustration}

Theorem~\ref{thm:heat_global} establishes a full-horizon diagonal Nash
design and operator-norm spectral convergence for the specified coefficients
at $\gamma=0.09$. This analytic result covers all modes, including those
beyond the numerical truncation, and supplies a uniform disturbance-weight
margin of $0.00252$. The changing-datum cost column in
Table~\ref{tab:galerkin} remains a projection-order diagnostic; it is not
the fixed-datum convergence result proved in that theorem.

The numerical studies of the branch near $\gamma_c^*$ concern other
attenuation levels. Neither the invariant rectangle used at $0.09$ nor
the local contraction identifies a global solvability threshold over
$\gamma$, rules out other branches, or proves necessity of the numerical
endpoint. For noncommuting coefficients, rigorous approximation still
requires uniform bounds and control of nonlinear operator products;
the diagonal convergence result is not claimed in that generality.

\section{Concluding Remarks}

The finite-horizon mixed $H_2/H_\infty$ design studied here is a
nonzero-sum feedback Nash problem. Its $H_2$ channel is an LQ energy.
The single-channel bounded-real equivalence specializes the stochastic
LQ uniform-convexity characterization. A simultaneous bounded-generator
approximation establishes the quadratic identities in the strongly
continuous mild operator class, and a coupled Riccati pair with uniformly
positive weights verifies both Nash inequalities and full-output strict
attenuation.

For every fixed $\gamma>0$, invertibility of the feedback block near the
zero terminal weights and a contraction of the coupled mild integral map
give a locally unique solution on a sufficiently short terminal interval.
This is an existence result for the infinite-dimensional coupled problem,
but does not guarantee continuation to an arbitrary prescribed horizon.
For the heat-equation example at $\gamma=0.09$, a uniform invariant rectangle gives full-horizon coupled solvability, bounded infinite-dimensional feedbacks and operator-norm spectral convergence. The computations at other attenuation levels remain branch diagnostics. Neither the theory nor the comparison table asserts global constrained $H_2$ optimality.

Further work should obtain continuation criteria and convergent Galerkin feedback approximations for general noncommuting coefficients.
Multidimensional noise and unbounded input or observation operators
require additional analysis.

\end{document}